\documentclass[12pt]{amsart}
\oddsidemargin 0mm
\evensidemargin 0mm
\topmargin 0mm
\textwidth 160mm
\textheight 230mm
\tolerance=9999
\usepackage{amssymb,amstext,amsmath,amscd,amsthm,amsfonts,enumerate,graphicx,latexsym,color}

\def\latex/{{\protect\LaTeX}}
\def\latexe/{{\protect\LaTeXe}}
\def\amslatex/{{\protect\AmS-\protect\LaTeX}}
\def\tex/{{\protect\TeX}}
\def\amstex/{{\protect\AmS-\protect\TeX}}
\def\bibtex/{{Bib\protect\TeX}}
\def\makeindx/{\textit{MakeIndex}}
\theoremstyle{plain} 
\newtheorem{thm}{Theorem}[section]
\newtheorem{prop}[thm]{Proposition}

\newtheorem{cor}[thm]{Corollary}

\theoremstyle{definition}

\newtheorem{dfn}[thm]{Definition}

\newtheorem{eg}[thm]{Example}
\newtheorem{conj}[thm]{Conjecture}
\newtheorem{ques}[thm]{Question}
\newtheorem{rmk}[thm]{Remark}

\theoremstyle{remark}
\newtheorem*{ac}{Acknowledgments}

\newcommand{\CC}{\mathbb{C}}

\DeclareMathOperator{\id}{id}

\DeclareMathOperator{\Tor}{Tor}
\DeclareMathOperator{\Ext}{Ext}
\DeclareMathOperator{\Hom}{Hom}
\DeclareMathOperator{\Ho}{H}
\DeclareMathOperator{\CI}{CI-dim}
\DeclareMathOperator{\G-dim}{G-dim}

\DeclareMathOperator{\GC-dim}{G_\text{$C$}-dim}

 \DeclareMathOperator{\Supp}{Supp}
 \DeclareMathOperator{\Spec}{Spec}

 \DeclareMathOperator{\pd}{pd}
 \DeclareMathOperator{\fd}{fd}

 \DeclareMathOperator{\cx}{cx}

 \DeclareMathOperator{\cod}{codim}
 \DeclareMathOperator{\depth}{depth}
 \DeclareMathOperator{\coker}{coker}

\DeclareMathOperator{\RHom}{{\bf R}Hom}
\newcommand{\tensor}{\otimes^{\bf L}}

 \DeclareMathOperator{\syz}{\mathrm{\Omega}}

\DeclareMathOperator{\NT}{\operatorname{\mathsf{NT}}}
\DeclareMathOperator{\res}{\operatorname{\mathsf{res}}}

\DeclareMathOperator{\md}{\operatorname{\mathsf{mod}}}
\DeclareMathOperator{\fpd}{\operatorname{\mathsf{fpd}}}
\DeclareMathOperator{\CM}{\operatorname{\mathsf{CM}}}
\DeclareMathOperator{\ttp}{\operatorname{\mathsf{T}}}
\DeclareMathOperator{\tei}{\operatorname{\mathsf{EI}}}
\DeclareMathOperator{\tep}{\operatorname{\mathsf{EP}}}

\def\mod{\operatorname{mod}}
\begin{document}
\baselineskip=15pt
\title{Modules that detect finite homological dimensions}
\author{Olgur Celikbas}
\address{Department of Mathematics, 323 Mathematical Sciences Bldg, University of Missouri--Columbia, Columbia, MO 65211 USA}
\email{celikbaso@missouri.edu}
\urladdr{http://www.math.missouri.edu/~celikbaso/}
\author{Hailong Dao}
\address{Department of Mathematics, University of Kansas, 405 Snow Hall, 1460 Jayhawk
Blvd, Lawrence, KS 66045-7523, USA}
\email{hdao@math.ku.edu}
\urladdr{http://www.math.ku.edu/~hdao/}
\author{Ryo Takahashi}
\address{Graduate School of Mathematics, Nagoya University, Furocho, Chikusaku, Nagoya 464-8602, Japan/Department of Mathematics, University of Nebraska, Lincoln, NE 68588-0130, USA}
\email{takahashi@math.nagoya-u.ac.jp}
\urladdr{http://www.math.nagoya-u.ac.jp/~takahashi/}
\thanks{2010 {\em Mathematics Subject Classification.} Primary 13D07; Secondary 13D05, 13H10}
\thanks{{\em Key words and phrases.} test module, homological dimension, vanishing of Tor and Ext}
\thanks{The second author was partially supported by NSF grants DMS 0834050 and DMS 1104017. The third author was partially supported by JSPS Grant-in-Aid for Young Scientists (B) 22740008 and by JSPS Postdoctoral Fellowships for Research Abroad}
\begin{abstract}
We study homological properties of test modules that are, in principle, modules that detect finite homological dimensions.
The main outcome of our results is a generalization of a classical theorem of Auslander and Bridger: we prove that, if a commutative Noetherian complete local ring $R$ admits a test module of finite Gorenstein dimension, then $R$ is Gorenstein.
\end{abstract}
\maketitle
\section{Introduction}

Throughout this paper, we assume that all rings are commutative Noetherian rings and all modules are finitely generated.
Unless otherwise specified, $R$ denotes a local ring with maximal ideal $\mathfrak{m}$ and residue field $k$.
The aim of this paper is to study test modules.

\begin{dfn}
An $R$-module $M$ is called a {\em test module} if all $R$-modules $N$ with $\Tor_{\gg 0}^{R}(M,N)=0$ have finite projective dimension.
\end{dfn}

There are many examples of test modules with interesting consequences in the literature.
For instance, it is well-known that the residue field $k$ of $R$ is a test module \cite[1.3]{BH}.
In general it requires highly nontrivial results to characterize all test modules, even over specific rings.
One such result is due to Huneke and Wiegand \cite[1.9]{HW2}: a test module over a singular hypersurface is nothing but a module of infinite projective dimension.
This result was later obtained by Miller \cite[1.1]{Mi} and the third author \cite[7.2]{Ryo2} by using different techniques. 

In this paper we will consider test modules discussed above in a broader context by studying their homological properties.
We will investigate when a module-finite algebra is a test module, and prove the following.

\begin{thm}\label{b}
Let $R\to S$ be a finite local homomorphism of local rings.
Assume either
\begin{enumerate}[\rm(1)]
\item
The ring $S$ is regular, or
\item
There exists a test $S$-module that has finite projective dimension over $R$.
\end{enumerate}
Then $S$ is a test $R$-module.
\end{thm}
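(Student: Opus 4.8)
The plan is to pass to the complex $X:=S\tensor_R N$, regarded in the derived category of $S$, and to prove two things: (a) if $\Tor^R_i(S,N)=0$ for $i\gg 0$, then $X$ is a perfect complex over $S$; and (b) if $X$ is perfect over $S$, then $\pd_R N<\infty$. Granting (a), step (b) is quick. Since $R\to S$ is local, $\mathfrak m$ kills the residue field $k_S$ of $S$, so $k_S$ is a $k$-vector space, and it is finite-dimensional because $S$ is module-finite over $R$; say $k_S\cong k^{\oplus d}$ as $R$-modules with $d\ge 1$. Associativity of the derived tensor product gives $k_S\tensor_R N\simeq k_S\tensor_S X$, hence $\Tor^R_i(k,N)^{\oplus d}\cong\Tor^R_i(k_S,N)\cong H_i(k_S\tensor_S X)$ for all $i$. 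If $X$ is perfect over $S$, the right-hand side vanishes for $i\gg 0$, so $\Tor^R_i(k,N)=0$ for $i\gg 0$, and therefore $\pd_R N<\infty$ by the Auslander--Buchsbaum--Serre criterion. (The trivial case $N=0$ is set aside throughout.)

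For step (a), first note that $H_i(X)=\Tor^R_i(S,N)$ vanishes for $i\gg 0$ by hypothesis and for $i<0$ automatically, so $X$ is a bounded complex of $S$-modules with finitely generated homology; set $s=\sup X\ge 0$. If $S$ is regular (case (1)), then $S$ has finite global dimension, so every such complex is perfect and we are done. Assume instead (case (2)) that $T$ is a test $S$-module with $p:=\pd_R T<\infty$. Then $\Tor^R_i(T,N)=0$ for all $i>p$, and, as $T$ is an $S$-module, associativity gives $T\tensor_S X\simeq T\tensor_R N$, so $H_i(T\tensor_S X)=0$ for $i>p$. Choose a minimal free resolution of $X$ over $S$ and, for any $t>\max\{s,p\}$, let $Y$ be the $t$-th syzygy module of $X$; dimension shifting along this resolution yields $\Tor^S_j(M,Y)\cong H_{t+j}(M\tensor_S X)$ for every $S$-module $M$ and every $j\ge 1$. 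Taking $M=T$ gives $\Tor^S_j(T,Y)=H_{t+j}(T\tensor_S X)=0$ for all $j\ge 1$, so $\pd_S Y<\infty$ because $T$ is a test $S$-module; since $Y$ is a high syzygy of $X$, this forces $X$ to be perfect over $S$.

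The step I expect to demand the most care is the one in case (2): transferring the test property of $T$, which is a statement about $S$-\emph{modules}, to the conclusion that the \emph{complex} $X$ is perfect. The mechanism is to replace $X$ by a sufficiently high syzygy \emph{module} $Y$ — which is legitimate precisely because $X$ has bounded homology — after which $\Tor^S_{\ge 1}(T,Y)$ is, up to a shift, the homology of $T\tensor_S X$, and the latter is bounded exactly because $\pd_R T<\infty$. The remaining ingredients — associativity of $\tensor$, existence of minimal free resolutions of bounded complexes, finite global dimension of regular local rings, the detection of perfection by homology against the residue field, and the fact that $k_S$ is a nonzero free $k$-module — are standard, and the bookkeeping passing between the residue fields of $R$ and of $S$ is the only other point requiring slight care.
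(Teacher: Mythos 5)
Your proof is correct, and it takes a genuinely different route from the paper's. The paper handles the two hypotheses separately: for $S$ regular it uses a finite $S$-free resolution of $S/\mathfrak{m}S$ to dimension-shift the $\Tor^R$-vanishing down to the residue field $k$ of $R$; for the second hypothesis it proves a more general statement (Theorem~\ref{thmTest}) by taking a high $R$-syzygy $N$ of the candidate module --- chosen so that $\Tor^R_{>0}(S,N)=0$ and $\depth_S(N\otimes_R S)\ge\depth S$ --- applying the test property of an $S$-test module to the module $N\otimes_R S$, invoking the Auslander--Buchsbaum formula to get that $N\otimes_R S$ is $S$-free, and then using the uniqueness of minimal free resolutions (base-changed along $R\to S$) to conclude that $N$ itself is $R$-free. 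You instead unify both hypotheses behind a single reduction: show that $X=S\tensor_R N$ is a perfect complex over $S$ (by finite global dimension in case (1), and by passing to a high $S$-syzygy of the complex $X$ and applying the test property of $T$ to that syzygy in case (2)), and then deduce $\pd_R N<\infty$ from one common final step, namely tensoring $X$ over $S$ with $k_S\cong k^{\oplus d}$. Your approach buys uniformity and avoids all depth and Auslander--Buchsbaum bookkeeping; what it loses relative to the paper is the more flexible hypothesis of Theorem~\ref{thmTest}, which the paper proves on the way to Corollary~\ref{propRegularextension} and which your argument does not recover. One small attribution nit: the implication $\Tor^R_{\gg0}(k,N)=0\Rightarrow\pd_R N<\infty$ is the elementary fact that $\Tor^R_i(k,N)$ computes the Betti numbers of $N$ (read off a minimal free resolution), not the Auslander--Buchsbaum--Serre criterion, which is the regularity statement; but this does not affect the correctness of the argument.
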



It is remarkable that the existence of a test module of finite homological dimension characterizes the ring itself: if there exists a test module of finite projective dimension, then $R$ is regular \cite[2.2.7]{BH}.
Auslander and Bridger \cite{AuBr} proved that if the residue field $k$ has finite \emph{G-dimension} (Definition \ref{GCdim}), then $R$ is Gorenstein.
Corso--Huneke--Katz--Vasconcelos and Goto--Hayasaka, under mild technical conditions, generalized this classical theorem: let $I$ be an integrally closed $\mathfrak{m}$-primary ideal of a local ring $R$.
Then $I$ is a test module \cite[3.3]{CHKV}.
If $I$ has finite G-dimension and contains a non zero-divisor of $R$, then $R$ is Gorenstein \cite[1.1(2)]{GH}.
We will obtain a generalization in this direction:

\begin{thm}\label{c}
Let $R$ be a homomorphic image of a Gorenstein local ring.
If there exists a test module of finite G-dimension, then $R$ is Gorenstein.
\end{thm}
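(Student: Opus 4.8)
The plan is to deduce Gorenstein-ness of $R$ by reducing to the residue field via a change-of-rings argument, then invoking the Auslander--Bridger theorem cited above. Let $M$ be a test module of finite G-dimension over $R$. The first step is to reduce to the complete case: since $R$ is a homomorphic image of a Gorenstein local ring, so is $\widehat{R}$, and $\widehat{M}=M\otimes_R\widehat{R}$ is again a test module (tensoring preserves the Tor-vanishing condition and faithfully flat descent recovers finite projective dimension over $R$ from finite projective dimension over $\widehat{R}$) of finite G-dimension over $\widehat{R}$. Since $R$ is Gorenstein iff $\widehat{R}$ is, we may assume $R$ is complete, hence admits a dualizing module $\omega$.

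The heart of the argument is to promote the test property of $M$ into information about the residue field. Since $R$ is a homomorphic image of a Gorenstein local ring, there is a well-behaved duality available: I would consider the Auslander transpose or, more convincingly, pass to a finite G-dimension resolution. Because $\G\text{-dim}_R M<\infty$, there is an exact sequence $0\to G_n\to\cdots\to G_0\to M\to 0$ with each $G_i$ totally reflexive (maximal Cohen--Macaulay over a Gorenstein ring and reflexive in general). Applying $\Hom_R(-,R)$ is exact on totally reflexive modules, and one obtains control over $\Ext^{>0}_R(M,R)$ and the transpose $\Tr M$. The key point is that $\Tr M$ then has finite projective dimension exactly when $M$ does, which lets us shift the test hypothesis onto $\Tr M$; combined with Theorem~\ref{b}(2) applied to a suitable finite extension, or a direct argument, we extract that some module built from $M$ forcing Tor-vanishing is, in fact, $k$ (or a module whose finite G-dimension forces $R$ Gorenstein). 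Concretely, the cleanest route: show that if $M$ is a test module then $\Tor^R_{\gg 0}(M,\omega)=0$ implies $\pd_R\omega<\infty$, hence $R$ is regular — but this only triggers when $\Tor$ vanishes, so instead one must show $\Tor^R_i(M,\omega)$ is forced to vanish in high degrees precisely because $\G\text{-dim}_R M<\infty$ (totally reflexive modules have no higher Tor against $\omega$ in the Gorenstein-fiber situation, via the rigidity coming from $M\tensor\omega$ being equivalent to $\RHom_R(\Hom_R(M,R),\omega)$).

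With that vanishing in hand, the test property yields $\pd_R\omega<\infty$. By \cite[2.2.7]{BH} (finite projective dimension of a test-detectable module, or directly since $\omega$ has finite injective dimension always, finite projective dimension of $\omega$ forces $R$ regular — more than we need), or more precisely: $\pd_R\omega<\infty$ together with $\omega$ dualizing gives $R$ Gorenstein immediately (indeed $\omega\cong R$ up to shift when $\pd_R\omega<\infty$ since a dualizing module of finite projective dimension is free of rank one). This completes the reduction.

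The main obstacle I anticipate is the middle step: establishing that finite G-dimension of the test module $M$ actually forces $\Tor^R_{\gg 0}(M,\omega)=0$ (or the analogous vanishing against whichever module we feed into the test property). Totally reflexive modules are Tor-rigid against $R$ itself but not automatically against $\omega$, so one genuinely needs the hypothesis that $R$ is a homomorphic image of a Gorenstein ring — this is what makes $\omega$ exist and what makes the duality $\RHom_R(-,\omega)$ interact correctly with total reflexivity (a totally reflexive module $G$ satisfies $G\otimes_R\omega\cong\RHom_R(G^*,\omega)$ with no higher homology). Pinning down this compatibility carefully, and handling the passage from "high Tor vanishes" to "the test conclusion applies to the right module," is where the real work lies; everything else is formal reduction and citation.
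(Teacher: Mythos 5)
Your reduction to the complete case is broken, and in a way the paper itself warns about. You claim $\widehat{M}=M\otimes_R\widehat{R}$ is again a test module over $\widehat{R}$, citing that Tor-vanishing is preserved by flat base change and finite projective dimension descends. But those two facts only prove that the test property \emph{descends} from $\widehat R$ to $R$, not that it \emph{ascends}: to verify that $\widehat M$ is a test module over $\widehat R$, you must handle $\widehat R$-modules $L$ that are not extended from $R$-modules, and you have no way to bring the test hypothesis on $M$ to bear on such $L$. The paper explicitly notes (in the discussion following Question~\ref{QCI}) that it is \emph{not known} whether the test-module property is preserved under flat local extensions, even under completion. So this step is a genuine gap, not a routine reduction. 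There is a second problem: even granting completeness, $R$ only has a dualizing \emph{complex}; a dualizing \emph{module} $\omega$ requires $R$ to be Cohen--Macaulay, and Cohen--Macaulayness is part of what you are trying to prove. You cannot set up your $\Tor^R_{\gg 0}(M,\omega)$ argument before you know $R$ is CM.

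The paper avoids both issues by never completing and never invoking a canonical module. Since a local homomorphic image of a Gorenstein ring already has a dualizing complex $D$, Theorem~\ref{mainthm1} uses $D$ in a derived-category argument to show that the Tor-test property is equivalent to the Ext-test property ($\Ext^{\gg 0}_R(M,N)=0$ forces $\id_RN<\infty$). Feeding $N=R$ into this, finite G-dimension of $M$ gives $\Ext^{\gg 0}_R(M,R)=0$, hence $\id_RR<\infty$, i.e.\ $R$ is Gorenstein (Corollary~\ref{propmainthm0} supplies the intermediate CM conclusion via Bass's conjecture). Your core computational instinct is actually sound: for totally reflexive $G$ over a CM local ring with canonical module, the Hom-evaluation isomorphism $G\tensor_R\omega\simeq\RHom_R(\Hom_R(G,R),\omega)$ does give $\Tor^R_{>0}(G,\omega)=0$, and hence $\Tor^R_{\gg0}(M,\omega)=0$ when $\G-dim_RM<\infty$; this would yield a clean direct argument in the CM complete case. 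But you cannot legitimately reach that case, and the dualizing-complex route of the paper is what handles the general (possibly non-CM, possibly non-complete) ring.
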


We will also study the structure of test and nontest modules over complete intersections.
Recall that $R$ is called a \emph{complete intersection} (respectively, \emph{hypersurface}) if its completion is a quotient of a regular local ring by a regular sequence (respectively, regular element).
The notion of a \emph{resolving subcategory} of the category $\mod R$ of finitely generated $R$-modules has been introduced by Auslander and Bridger \cite{AuBr}, which is a full subcategory containing free modules and closed under direct summands, extensions and syzygies.
For each $R$-module $M$ the smallest resolving subcategory containing $M$ is called the \emph{resolving closure} of $M$.
We will prove the following.

\begin{thm}\label{a}
Let $R$ be a complete intersection.
\begin{enumerate}[\rm(1)]
\item
The test $R$-modules are precisely the $R$-modules of maximal complexity.
\item
The nontest $R$-modules form a resolving subcategory of $\mod R$.
If it is written as the resolving closure of some module, then $R$ is a hypersurface.
\end{enumerate}
\end{thm}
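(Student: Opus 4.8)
The plan is to translate everything into the Avramov--Buchweitz theory of cohomological support varieties over a complete intersection. Set $c=\cod R$ and perform the usual reductions: replace $R$ by its completion and, if needed, enlarge the residue field to be infinite (neither step changes complexity or the vanishing of $\Tor$), so that $R$ carries a well-behaved theory with varieties $V_R(-)$ living in the affine cone $\mathbb{A}^c$ over (an extension of) $k$. I will use four standard inputs. First, $\pd_R N<\infty$ iff $V_R(N)=\{0\}$, and in general $\cx_R N=\dim V_R(N)$. Second, for finitely generated $M,N$ one has $\Tor_{\gg 0}^R(M,N)=0$ iff $V_R(M)\cap V_R(N)=\{0\}$. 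Third, $V_R(M)$ is a closed cone, with $V_R(M\oplus M')=V_R(M)\cup V_R(M')$, with $V_R(\Omega M)=V_R(M)$, and with $V_R(B)\subseteq V_R(A)\cup V_R(C)$ for every short exact sequence $0\to A\to B\to C\to 0$. Fourth, a realizability statement: for $c\ge 1$ every line through the origin in $\mathbb{A}^c$ equals $V_R(N)$ for some finitely generated module $N$ (necessarily $\cx_R N=1$). Note that $M$ has maximal complexity precisely when $\cx_R M=c$, equivalently $\dim V_R(M)=c$, equivalently $V_R(M)=\mathbb{A}^c$, since $\mathbb{A}^c$ is irreducible.

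\emph{Part (1).} Suppose first $V_R(M)=\mathbb{A}^c$ and let $N$ satisfy $\Tor_{\gg 0}^R(M,N)=0$. Then $V_R(N)=V_R(M)\cap V_R(N)=\{0\}$ by the second input, so $\pd_R N<\infty$ by the first; hence $M$ is a test module. Conversely suppose $\cx_R M<c$. Then $V_R(M)$ is a proper closed subcone of $\mathbb{A}^c$, so there is a point $v\neq 0$ with $v\notin V_R(M)$; by the fourth input pick $N$ with $V_R(N)=kv$. Now $V_R(M)\cap V_R(N)$ is a proper closed subcone of the line $kv$, hence equals $\{0\}$, so $\Tor_{\gg 0}^R(M,N)=0$ by the second input, while $\cx_R N=1$ forces $\pd_R N=\infty$ by the first. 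Thus $M$ is not a test module. So the test modules are exactly those with $V_R(M)=\mathbb{A}^c$, i.e.\ those of maximal complexity. (When $R$ is regular, $c=0$ and every module is a test module, in agreement with the statement.)

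\emph{Part (2).} The case $c=0$ (i.e.\ $R$ regular) is degenerate and we set it aside, so assume $c\ge 1$. By part (1) the nontest $R$-modules are exactly the modules $M$ with $\cx_R M\le c-1$, i.e.\ $\dim V_R(M)<c$. Using the third input, this class contains the free modules and is closed under direct summands, syzygies, and extensions (for an extension, $\dim V_R(B)\le\max\{\dim V_R(A),\dim V_R(C)\}$), so it is a resolving subcategory. Now suppose this subcategory is the resolving closure of a single module $X$. Since $X$ is nontest, $W:=V_R(X)$ is a proper closed subcone of $\mathbb{A}^c$. The class $\{M:V_R(M)\subseteq W\}$ is again resolving by the third input (it contains the free modules because $0\in W$) and contains $X$, hence contains the resolving closure of $X$; therefore \emph{every} nontest module has support variety contained in $W$. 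If $c\ge 2$, choose $v\neq 0$ in $\mathbb{A}^c\setminus W$ and, by the fourth input, a module $N$ with $V_R(N)=kv$; then $\cx_R N=1\le c-1$, so $N$ is nontest, yet $V_R(N)=kv\not\subseteq W$---a contradiction. Hence $c\le 1$, i.e.\ $R$ is a hypersurface.

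Once the dictionary is in place the argument is pure bookkeeping with cones; the substantive points---and where I expect the real work to lie---are the two inputs that are not formal: the realizability of prescribed support lines by honest finitely generated modules over a complete intersection (whose \emph{completion}, not $R$ itself, is a regular quotient), and the verification that passing to the completion and extending the residue field do not affect whether a module is a test module. Both are available in the established theory of cohomological support over complete intersections, but each must be applied carefully to its hypotheses.
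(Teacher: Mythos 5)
Your proposal follows the same overall strategy as the paper for Part (1): translate everything into Avramov--Buchweitz support varieties over a complete intersection and characterize test modules as those of maximal complexity. The decomposition into the four "standard inputs" is exactly what the paper's Proposition~2.7 rests on, and the Part~(1) argument is correct \emph{provided} those inputs hold over $R$ itself rather than over $\hat R$.

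Here is the genuine gap. Your opening "usual reduction" --- replace $R$ by its completion --- is not available for this problem, and the paper explicitly flags this. One direction is fine: if $\hat M\in\ttp(\hat R)$ and $\Tor^R_{\gg 0}(M,N)=0$ for $N\in\md R$, then $\Tor^{\hat R}_{\gg 0}(\hat M,\hat N)=0$ forces $\pd_{\hat R}\hat N<\infty$, hence $\pd_R N<\infty$; so $\hat M\in\ttp(\hat R)$ implies $M\in\ttp(R)$, and the "maximal complexity $\Rightarrow$ test" direction does reduce to the complete case. But the other direction does not: to show $\cx_R M<c$ implies $M\notin\ttp(R)$ you must produce a witness $N$ in $\md R$, and a witness constructed over $\hat R$ does not descend. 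The paper's discussion after Question~3.7 states outright that it is not known whether the test property is preserved under completion (or any local flat extension), and it credits Bergh with precisely the completion-free route: the citation of \cite[2.3]{BerSV} realizes a prescribed closed homogeneous cone $W\subseteq\overline k^{\,c}$ as $V(N)$ for some $N\in\md R$, with no passage to $\hat R$. So your fourth input (realizability over $R$) is indeed the crux, and once you have it the completion reduction is unnecessary; you should drop the reduction and apply Bergh's result directly. As written, the final paragraph asserts that the behavior of the test property under completion "is available in the established theory," which is the opposite of what the authors say and would make your first sentence circular if taken at face value.

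For Part (2) your argument is a genuine variant of the paper's, and it is a nice one. The paper goes through Proposition~4.8: if $\NT(R)=\res M$ with $\NT(R)\neq\fpd(R)$, there must exist $X$ of infinite projective dimension with $\Tor^R_{\gg 0}(X,X)=0$, which Jorgensen's rigidity theorem \cite[1.2]{Jo2} rules out over a complete intersection; the existence of a nontest module of infinite projective dimension comes from a module of complexity one via \cite[5.7]{AGP}. You instead observe that $\{M:V(M)\subseteq V(X)\}$ is resolving and hence contains $\res X$, so $\NT(R)=\res X$ would confine every nontest support variety inside the proper cone $V(X)$, contradicted when $c\ge 2$ by realizing a line missed by $V(X)$. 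Both are correct. The paper's route avoids invoking realizability at this point (it only needs the existence of \emph{some} module of complexity one), while yours leans again on the realizability input and hence carries the same need for the over-$R$ (not over-$\hat R$) version. Finally, the first claim of Part~(2) should be restricted, as the paper's Corollary~4.5 does, to nonregular $R$: over a regular ring $\NT(R)=\emptyset$ is not resolving, which you do implicitly acknowledge by setting aside $c=0$.
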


\noindent
The first assertion extends the result of Huneke and Wiegand stated above.
The second says nontest modules form a good subcategory but its structure is not simple in general.

The organization of this paper is as follows.
In Section 2, we analyze basic properties of test modules.
Theorem \ref{b}(1) and an extended version of Theorem \ref{b}(2) are shown in this section (Proposition \ref{1036} and Theorem \ref{thmTest}).
We also prove Theorem \ref{a}(1) in this section (Proposition \ref{propCI}).
In Section 3, we study the existence of test modules of finite homological dimensions.
We characterize test modules in terms of vanishing of Ext, which yields a generalized version of Theorem \ref{c} (Theorem \ref{mainthm1}).
In Section 4, we develop categorical approaches for nontest modules.
Theorem \ref{a}(2) is proved in this section (Corollaries \ref{12241} and \ref{cicor}).

\section{Basic properties of test modules}

In this section we analyze basic properties of test modules.
We should note that modules akin to test modules were studied in the literature before, see for example \cite{IP} and \cite{Levin}. 

First of all, we remark that our definition of a test module is \emph{different} from the one defined by Ramras \cite[1.1]{Ramras} (see also \cite{perp} and \cite{Jot}).
He defined and studied test modules for projectivity in terms of the vanishing of a single Ext module.
More precisely, an $R$-module $M$ is called an \textit{Ext-test} module (a test module in the sense of \cite{Jot} and \cite{Ramras}) if every $R$-module $P$ with $\Ext^{1}_{R}(P,M)=0$ is free.
We record a few observations concerned with the test and Ext-test modules:

\begin{rmk}
(1) Nontrivial examples of test modules over arbitrary local rings are abundant: if $(R, \mathfrak{m})$ is a local ring and $M\in \md R$, then there exists an integer $n>0$ such that $\mathfrak{m}^{n}M$ is a test module, see \cite[1.5(1), 2.1, 2.2 and 2.3(5)]{Tony} and \cite[2.4(b)]{Yas}.\\
(2) Test and Ext-test modules are different in general: by definition an Ext-test module has depth at most one, but a test module does not necessarily so.\\
(3) Ext-test modules are indeed test modules over complete intersections: this follows from the fact that the vanishing of $\Ext^{\gg0}_{}(-,-)$ is equivalent to the vanishing of $\Tor^{}_{\gg 0}(-,-)$ over complete intersections by \cite[6.1]{AvBu}.\\
(4) Test modules are indeed Ext-test modules over hypersurfaces that are either Artinian rings or one-dimensional domains: asssume $R$ is such a ring, $M$ is a test $R$-module and $\Ext^{1}_{R}(P,M)=0$ for some $P\in \md R$.
We may assume by \cite[Theorem 1]{Jot} that $R$ is nonregular, whence $\pd_RM=\infty$.
We see $\Ext^{>0}_{R}(P,M)=0$ from \cite[3.5]{Be2} and \cite[4.14]{CeD}.
This implies that $P$ is free by \cite[5.12]{AvBu}. 
\end{rmk}

Next we will prove that test modules behave well modulo non zero-divisors.
We denote by $\ttp(R)$ the full subcategory of $\md R$ consisting of test modules, and by $\Omega^nM$ (or $\Omega^{n}_RM$ when necessary) the $n$th syzygy of an $R$-module $M$.

\begin{prop}\label{moduloNZD}
Let $(R, \mathfrak{m})$ be a local ring and $M$ an $R$-module.
Let $x\in \mathfrak{m}$ be a non zero-divisor on $M$.
Then:
\begin{enumerate}[\rm(i)]
\item $M\in \ttp(R)$ if and only if $M/xM\in \ttp(R)$.
\item  Assume further that $x$ is a non zero-divisor on $R$.
\begin{enumerate}[\rm(a)]
\item If $M/xM\in \ttp(R/xR)$, then $M\in \ttp(R)$. 
\item If $x\notin \mathfrak{m}^{2}$ and $M\in \ttp(R)$, then $M/xM\in \ttp(R/xR)$.
\end{enumerate}
\end{enumerate}
\end{prop}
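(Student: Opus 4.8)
The plan is to isolate two elementary reductions and then assemble the three assertions from them. \emph{Reduction A}: for every $R$-module $N$, $\Tor_{\gg 0}^R(M,N)=0$ if and only if $\Tor_{\gg 0}^R(M/xM,N)=0$. I would get this from the short exact sequence $0\to M\xrightarrow{x}M\to M/xM\to 0$ (available since $x$ is a non zero-divisor on $M$): in the long exact Tor sequence, $\Tor_i^R(M/xM,N)$ is sandwiched between $\Tor_i^R(M,N)$ and $\Tor_{i-1}^R(M,N)$, which gives one implication, while multiplication by $x$ on $\Tor_i^R(M,N)$ is surjective whenever $\Tor_i^R(M/xM,N)=0$, and Nakayama's lemma (these Tor modules are finitely generated and $x\in\mathfrak m$) upgrades that surjectivity to vanishing, giving the other. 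Since a test module constrains $N$ only through the vanishing of $\Tor$ and the finiteness of $\pd_RN$, Reduction A is already part (i). \emph{Reduction B}: if $x$ is a non zero-divisor on both $R$ and an $R$-module $L$, and $C$ is an $R/xR$-module, then $\Tor_i^R(L,C)\cong\Tor_i^{R/xR}(L/xL,C)$ for all $i$; this is standard, since a free $R$-resolution $F_\bullet$ of $L$ satisfies $\Tor_{>0}^R(L,R/xR)=0$, so $F_\bullet/xF_\bullet$ is a free $R/xR$-resolution of $L/xL$, while $F_\bullet\otimes_RC\cong(F_\bullet/xF_\bullet)\otimes_{R/xR}C$ because $C$ is an $R/xR$-module.

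For (ii)(a), I would argue as follows. Assume $M/xM\in\ttp(R/xR)$ and take $N$ with $\Tor_{\gg 0}^R(M,N)=0$; the goal is $\pd_RN<\infty$. Reduction A gives $\Tor_{\gg 0}^R(M/xM,N)=0$. Passing from $N$ to a high syzygy $\Omega^n_RN$ changes neither the finiteness of $\pd_RN$ nor, for $n$ large, the vanishing $\Tor_{\ge 1}^R(M/xM,\Omega^n_RN)=0$; and $\Omega^n_RN$ (with $n\ge 1$) sits inside a free $R$-module, so $x$, being a non zero-divisor on $R$, is one on $\Omega^n_RN$. Thus I may assume $x$ is a non zero-divisor on $N$ and $\Tor_{\ge 1}^R(M/xM,N)=0$. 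Reduction B (with $L=N$, $C=M/xM$) then yields $\Tor_{\ge 1}^{R/xR}(M/xM,N/xN)=0$, so $\pd_{R/xR}(N/xN)<\infty$ because $M/xM$ is a test $R/xR$-module; and since $x$ is a non zero-divisor on both $R$ and $N$, a minimal $R$-free resolution of $N$ reduces modulo $x$ to a minimal $R/xR$-free resolution of $N/xN$, so $\pd_RN=\pd_{R/xR}(N/xN)<\infty$.

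For (ii)(b), assume $M\in\ttp(R)$ and $x\in\mathfrak m\setminus\mathfrak m^2$, and let $\bar N$ be an $R/xR$-module with $\Tor_{\gg 0}^{R/xR}(M/xM,\bar N)=0$. Here I would apply Reduction B with $L=M$ and $C=\bar N$ (legitimate since $x$ is a non zero-divisor on $M$), identifying those Tor modules with $\Tor_{\gg 0}^R(M,\bar N)$; hence the latter vanish, and $M$ being a test $R$-module forces $\pd_R\bar N<\infty$. The hypothesis $x\notin\mathfrak m^2$ now enters through the classical change-of-rings theorem for $R\to R/xR$: an $R/xR$-module of finite projective dimension over $R$ also has finite projective dimension over $R/xR$ (in fact $\pd_R\bar N=\pd_{R/xR}\bar N+1$; see \cite{BH}). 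Therefore $\pd_{R/xR}\bar N<\infty$, i.e.\ $M/xM\in\ttp(R/xR)$.

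The main obstacle---really the only non-formal ingredient---is this last change-of-rings statement, and it is precisely where $x\notin\mathfrak m^2$ is indispensable: over the regular ring $R=k[[u,y]]$ the free module $R$ is trivially a test module, yet $R/u^2R$ is not a test module over the non-regular ring $R/u^2R$, so (ii)(b) genuinely fails when $x\in\mathfrak m^2$. The remaining care points are minor: the passage to a high syzygy in (ii)(a), needed to make $x$ act as a non zero-divisor on the module being tested, and keeping straight in Reduction B which variable is annihilated by $x$ and which carries $x$ as a non zero-divisor.
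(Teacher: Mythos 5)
Your proof is correct and follows essentially the same route as the paper's: part (i) via the long exact Tor sequence (plus Nakayama), part (ii)(a) by passing to a syzygy so that $x$ becomes regular on the test module, then using the base-change isomorphism for Tor together with $\pd_R N'=\pd_{R/xR}(N'/xN')$, and part (ii)(b) by the same base-change isomorphism applied to $M$ combined with Nagata's change-of-rings theorem (the paper cites Avramov's notes for this, but the fact is the same). The only cosmetic difference is that the paper takes a single syzygy $\Omega N$ where you take a high syzygy; one syzygy already suffices since $x$ is regular on any submodule of a free $R$-module.
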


\begin{proof}
There is an exact sequence $0 \to M \xrightarrow{x} M \to M/xM \to 0$.
Hence $\Tor_{\gg 0}^{R}(M,N)=0$ if and only if $\Tor_{\gg 0}^{R}(M/xM,N)=0$ for all $R$-modules $N$.
This proves (i).
For the rest of the proof, we assume that $x$ is a non zero-divisor on both $R$ and $M$.
Assume that $M/xM\in \ttp(R/xR)$ and that $\Tor_{\gg 0}^{R}(M,N)=0$ for some $R$-module $N$.
Then, by the above exact sequence, $\Tor_{\gg 0}^{R}(M/xM,N')=0$, where $N':=\Omega N$.
Then, since $x$ is a non zero-divisor on both $R$ and $N'$, it follows that $\Tor_{\gg 0}^{R/xR}(M/xM,N'/xN')=0$.
Therefore $\pd_{R/xR}(N'/xN')<\infty$.
As $\pd_{R}(N')=\pd_{R/xR}(N'/xN')$ by \cite[1.3.5]{BH}, this proves (ii)(a).
Finally assume $x\notin \mathfrak{m}^{2}$ and $M\in \ttp(R)$.
Suppose $\Tor_{\gg 0}^{R/xR}(M/xM,T)=0$ for some $R/xR$-module $T$.
Then, as $\Tor_{i}^{R/xR}(M/xM,T)\cong \Tor_{i}^{R}(M,T)$ for all $i\geq 0$, we have $\pd_RT<\infty$.
Since $x\notin \mathfrak{m}^{2}$, we have $\pd_{R/xR}T<\infty$ by \cite[3.3.5(1)]{Av1}.
This proves that $M/xM\in \ttp(R/xR)$ and hence finishes the proof.
\end{proof}

\begin{rmk}
The assumption that $x\notin \mathfrak{m}^{2}$ in Proposition \ref{moduloNZD}(ii)(b) is necessary: assume $(R, \mathfrak{m})$ is a regular local ring and $0\neq x\in \mathfrak{m}^{2}$. Set $S=R/xR$ and $M=R$. Then $M\in \md R=\ttp(R)$. However, since $S$ is not regular, $M/xM\notin \ttp(R/xR)$.
\end{rmk}

Recall that a local homomorphism $f:R\to S$ of local rings is called a \emph{finite local ring homomorphism} if $S$ is a finitely generated $R$-module via $f$.
Now we study the behavior of test modules under finite local ring homomorphisms.
First, we point out that regular extensions of local rings are test modules.

\begin{prop}\label{1036}
Let $(R, \mathfrak{m},k) \to (S, \mathfrak{n},l)$ be a finite local ring homomorphism.
If $S$ is regular, then $S\in \ttp(R)$.
\end{prop}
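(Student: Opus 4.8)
The plan is to verify the definition of test module directly. Given an $R$-module $N$ with $\Tor_i^R(S,N)=0$ for $i\gg0$, I must show $\pd_RN<\infty$, equivalently $\Tor_i^R(k,N)=0$ for $i\gg0$. The idea is to route the computation through the residue field $l$ of $S$, regarded as an $R$-module. Since $R\to S$ is a local homomorphism, $\mathfrak{m}S\subseteq\mathfrak{n}$, so $\mathfrak{m}$ annihilates $l$; hence $l$ is a finite-dimensional $k$-vector space, say $l\cong k^{\oplus r}$ as $R$-modules, with $r=[l:k]$ finite (as $l$ is a quotient of the finite $R$-module $S$) and positive. Consequently $\Tor_i^R(l,N)\cong\Tor_i^R(k,N)^{\oplus r}$ for every $i$, so it suffices to prove $\Tor_i^R(l,N)=0$ for $i\gg0$.

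To connect $\Tor^R(l,N)$ with the hypothesis $\Tor^R_{\gg0}(S,N)=0$, I would invoke the change-of-rings spectral sequence attached to $R\to S$ and the $S$-module $l$, which arises from the associativity isomorphism $l\tensor_RN\simeq l\tensor_S(S\tensor_RN)$ in the derived category of $R$:
\[
E^2_{p,q}=\Tor_p^S\!\big(l,\Tor_q^R(S,N)\big)\ \Longrightarrow\ \Tor_{p+q}^R(l,N).
\]
Two finiteness observations then finish this step. First, by hypothesis $\Tor_q^R(S,N)=0$ for $q\gg0$, so only finitely many rows of the $E^2$-page are nonzero. Second, each $\Tor_q^R(S,N)$ is a finitely generated $S$-module (because $S$ is module-finite over $R$ and $N$ is finitely generated), and since $S$ is regular its projective dimension over $S$ is at most $\dim S$; hence $\Tor_p^S(l,\Tor_q^R(S,N))=0$ for $p>\dim S$. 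Therefore the nonzero part of the $E^2$-page lies in a finite rectangle, so $E^2_{p,q}=0$ whenever $p+q>\dim S+n$ (where $n$ is a bound past which $\Tor_q^R(S,N)$ vanishes), and the spectral sequence forces $\Tor_m^R(l,N)=0$ for all $m>\dim S+n$.

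Combining the two steps, $\Tor_m^R(k,N)=0$ for $m\gg0$, so $\pd_RN<\infty$, which is exactly what is needed to conclude $S\in\ttp(R)$. The argument is essentially formal once its two ingredients are in place, and the single point where the hypothesis genuinely enters is the bound $\pd_S(-)\le\dim S$ coming from regularity of $S$; the identification of $l$ as a finite free $k$-module and the change-of-rings spectral sequence are standard. I do not expect a real obstacle here, only the bookkeeping of the vanishing region of the $E^2$-page. (One may also phrase the middle step without spectral sequences: the hypothesis says $S\tensor_RN$ is a bounded complex of $S$-modules with finitely generated homology, hence a perfect complex since $S$ has finite global dimension, and then $l\tensor_S(S\tensor_RN)\simeq l\tensor_RN$ has bounded homology, giving the same conclusion.)
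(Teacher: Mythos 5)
Your proof is correct and follows essentially the same strategy as the paper's: use regularity of $S$ to pass from the vanishing of $\Tor^R_{\gg0}(S,N)$ to the vanishing of $\Tor^R_{\gg0}$ of $N$ against a residue-field-like $S$-module, and then identify that module with a finite direct sum of copies of $k$ over $R$. The paper executes this more elementarily by taking a finite free $S$-resolution of $S/\mathfrak{m}S$ (which, like your $l$, is a finite-dimensional $k$-vector space as an $R$-module) and applying $\Tor^R(M,-)$ with direct dimension-shifting along that resolution, whereas you invoke the change-of-rings spectral sequence (or its derived-category packaging) to the same effect; the two are interchangeable formulations of the same idea, and both hinge at exactly the same point on $\pd_S\le\dim S$.
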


\begin{proof}
Suppose that $\Tor_{\gg 0}^{R}(M,S)=0$ for some $R$-module $M$.
There is an exact sequence $0 \to G_{d} \to G_{d-1} \to \ldots \to G_{0} \to S/\mathfrak{m}S \to 0$ of $S$-modules with $G_{i}$ being free.
We see from this that $\Tor_{\gg 0}^{R}(M,S/\mathfrak{m}S)=0$.
Since $S/\mathfrak{m}S=k^{(n)}$ for some $n>0$, we get $\Tor_{\gg 0}^{R}(M,k)=0$.
This implies that $M$ has finite projective dimension and hence $S\in \ttp(R)$.
\end{proof}

\begin{thm}\label{thmTest}
Let $R\to S$ be a finite local ring homomorphism and $M\in \ttp(S)$.
Assume any $R$-module $X$ with $\Tor_{\gg 0}^{R}(S,X)=0$ satisfies $\Tor_{\gg 0}^{R}(M,X)=0$.
Then $S\in \ttp(R)$.
\end{thm}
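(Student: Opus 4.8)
The plan is to reduce to a situation where $S$ is Tor-independent from the relevant module, and then run a change-of-rings argument that uses the hypothesis $M\in\ttp(S)$. So let $X$ be an $R$-module with $\Tor_{\gg 0}^{R}(S,X)=0$; the goal is to show $\pd_{R}X<\infty$.

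First I would pass to a high syzygy of $X$. Fix $n\ge 0$ with $\Tor_{i}^{R}(S,X)=0$ for all $i>n$, and set $Y=\Omega_{R}^{n}X$. Iterating the dimension shift along the exact sequences $0\to\Omega_{R}^{j+1}X\to F_{j}\to\Omega_{R}^{j}X\to 0$ with $F_{j}$ free gives $\Tor_{i}^{R}(S,Y)\cong\Tor_{i+n}^{R}(S,X)=0$ for every $i\ge 1$; moreover $\pd_{R}X<\infty$ if and only if $\pd_{R}Y<\infty$. In particular $\Tor_{\gg 0}^{R}(S,Y)=0$, so the hypothesis of the theorem, applied to the module $Y$, yields $\Tor_{\gg 0}^{R}(M,Y)=0$.

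Next I would use the vanishing $\Tor_{i}^{R}(S,Y)=0$ for $i\ge 1$ to transport resolutions along $R\to S$. Let $G_{\bullet}\to Y$ be a minimal free resolution over $R$. Since $\Tor_{i}^{R}(S,Y)=0$ for $i\ge 1$, the complex $S\otimes_{R}G_{\bullet}$ is acyclic in positive degrees and hence is a free resolution of $S\otimes_{R}Y$ over $S$; and because the entries of the differentials of $G_{\bullet}$ lie in $\mathfrak{m}$ while $\mathfrak{m}S$ is contained in the maximal ideal of $S$ (as $R\to S$ is local), this resolution is again minimal. Comparing lengths of minimal free resolutions (using that $G_{i}\ne 0$ if and only if $S\otimes_{R}G_{i}\ne 0$) gives $\pd_{S}(S\otimes_{R}Y)=\pd_{R}Y$. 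Furthermore, since $M$ is an $S$-module there is a natural isomorphism of complexes $M\otimes_{R}G_{\bullet}\cong M\otimes_{S}(S\otimes_{R}G_{\bullet})$, and passing to homology identifies $\Tor_{i}^{R}(M,Y)\cong\Tor_{i}^{S}(M,S\otimes_{R}Y)$ for all $i$.

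Putting these together, $\Tor_{\gg 0}^{S}(M,S\otimes_{R}Y)\cong\Tor_{\gg 0}^{R}(M,Y)=0$, so $M\in\ttp(S)$ forces $\pd_{S}(S\otimes_{R}Y)<\infty$; hence $\pd_{R}Y<\infty$, and therefore $\pd_{R}X<\infty$. Thus every $X$ with $\Tor_{\gg 0}^{R}(S,X)=0$ has finite projective dimension, i.e.\ $S\in\ttp(R)$. I do not anticipate a serious obstacle; the only step that needs care is the change-of-rings manipulation in the third paragraph — one must first replace $X$ by the syzygy $Y$ so that the Tor-independence of $S$ and $Y$ is strong enough both to recognize $S\otimes_{R}G_{\bullet}$ as the minimal $S$-free resolution of $S\otimes_{R}Y$ and to factor the computation of $\Tor_{\ast}^{R}(M,Y)$ through $(-)\otimes_{S}(S\otimes_{R}Y)$.
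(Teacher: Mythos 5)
Your proof is correct, and it is a streamlined variant of the paper's argument. Both proofs rest on the same two change-of-rings ingredients: (a) pass to a syzygy so that $\Tor_{>0}^{R}(S,-)$ vanishes, whence a minimal $R$-free resolution tensored with $S$ remains a minimal $S$-free resolution; and (b) use this to identify $\Tor_{*}^{R}(M,-)$ with $\Tor_{*}^{S}(M,S\otimes_{R}-)$ so that $M\in\ttp(S)$ can be applied. Where you differ: the paper argues by contradiction and inserts an extra syzygy step to raise the depth of $N\otimes_R S$ to at least $\depth S$, so that finiteness of $\pd_{S}(N\otimes_R S)$ plus the Auslander--Buchsbaum formula gives freeness, and only then invokes the minimal-resolution transfer to conclude $N$ is $R$-free. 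You instead observe directly that the minimal-resolution transfer already gives the clean equality $\pd_{R}Y=\pd_{S}(S\otimes_{R}Y)$ (finite or infinite), which makes the extra syzygy, the depth computation, and Auslander--Buchsbaum unnecessary and lets you argue directly rather than by contradiction. The payoff is a shorter, more transparent proof; the paper's detour is logically sound but does more work than is needed. One small point to be aware of: the step ``$G_i\ne 0$ iff $S\otimes_R G_i\ne 0$'' is exactly what makes the projective-dimension comparison work in the infinite case as well, and you correctly flag it.
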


\begin{proof}
Assume on the contrary that $S\notin \ttp(R)$.
Then there exists an $R$-module $L$ such that $\Tor_{\gg 0}^{R}(S,L)=0$ and $\pd_RL=\infty$.
Hence $\Tor_{>0}^{R}(S,T)=0$ for some $T=\Omega_{R}^nL$.
Set $d=\depth S$ and let $N=\syz_{R}^{d}T$.
There is an exact sequence $0 \to N \to F_{d-1} \to \ldots \to F_{0} \to T \to 0$ of $R$-modules, where each $F_{i}$ is a free $R$-module.
We deduce that
$$
0 \to N\otimes_{R}S \to F_{d-1}\otimes_{R}S \to \ldots \to F_{0}\otimes_{R}S \to T\otimes_{R}S \to 0
$$
is exact.
This implies $\depth_{S}(N\otimes_{R}S)\geq d$.
Using \cite[A.4.20]{Larsbook}, we have isomorphisms
$$
N\tensor_{R}M\simeq N\tensor_{R}(S\tensor_{S}M)\simeq (N\tensor_{R}S)\tensor_{S}M\simeq (N\otimes_{R}S)\tensor_{S}M
$$
in the derived category of $R$, whose last isomorphism holds by $\Tor_{>0}^{R}(S,N)=0$.
Thus $\Tor_{i}^{R}(M,N)\cong \Tor_{i}^{S}(M,N\otimes_{R}S)$ for all $i>0$.
By assumption, we have $\Tor_{\gg 0}^{R}(M,N)=0$, whence $\Tor_{\gg0}^S(M,N\otimes_RS)=0$.
As $M\in \ttp(S)$, it holds that $\pd_S(N\otimes_{R}S)<\infty$.
The Auslander-Buchsbaum formula shows that $N\otimes_{R}S$ is a free $S$-module.
Let $G$ be a minimal free resolution of $N$ over $R$.
Then $G\otimes_{R}S$ is a minimal free resolution of $N\otimes_{R}S$ over $S$.
The uniqueness of minimal free resolutions implies $G_{i}\otimes_{R}S=0$, and hence $G_i=0$, for all $i>0$.
Therefore $N$ is a free $R$-module, and $\pd_RL<\infty$.
This is a contradiction.
\end{proof}

We record a direct consequence of Theorem \ref{thmTest}.

\begin{cor}\label{propRegularextension}
Let $R\to S$ be a finite local ring homomorphism. 
If there is $M\in\ttp(S)$ with $\pd_RM<\infty$, then $S\in \ttp(R)$.
In particular, if $S=R/(\underline{x})$ where $\underline{x}$ is a regular sequence on $R$, then the ring $R$ is regular.
\end{cor}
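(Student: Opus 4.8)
The plan is to read off both assertions directly from Theorem~\ref{thmTest}. For the first statement, note that the hypothesis $\pd_RM<\infty$ more than suffices to trigger Theorem~\ref{thmTest}: if $\pd_RM=d$, then $\Tor_i^R(M,X)=0$ for every $i>d$ and \emph{every} $R$-module $X$, so a fortiori every $R$-module $X$ with $\Tor_{\gg 0}^R(S,X)=0$ satisfies $\Tor_{\gg 0}^R(M,X)=0$. Thus $M$ verifies the condition appearing in the statement of Theorem~\ref{thmTest}, and we conclude $S\in\ttp(R)$. There is really no obstacle in this step: all the work has already been done in Theorem~\ref{thmTest}.

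For the ``in particular'' clause I would keep the standing hypothesis of the corollary in force and add the assumption $S=R/(\underline x)$ with $\underline x$ a regular sequence on $R$. Then the Koszul complex on $\underline x$ is a finite free resolution of $S$ over $R$, so $\pd_RS<\infty$; on the other hand, the first assertion already gives $S\in\ttp(R)$. Hence $S$ is a test $R$-module of finite projective dimension, and by \cite[2.2.7]{BH} the mere existence of such a module forces $R$ to be regular.

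The only point that warrants a moment of care is the bookkeeping of hypotheses in the ``in particular'': I would make explicit that the test $S$-module $M$ with $\pd_RM<\infty$ is precisely the one assumed at the start of the corollary. It is perhaps worth adding that for a regular-sequence quotient this hypothesis can hold only when $S$ is itself regular --- the standard change-of-rings comparison yields $\pd_SM<\infty$ from $\pd_RM<\infty$, and then $S$ is regular by \cite[2.2.7]{BH} applied over $S$ --- so alternatively one could invoke Proposition~\ref{1036} in place of Theorem~\ref{thmTest} at the cost of tacitly strengthening the hypothesis. Either route ends at the same place, and I anticipate no genuine difficulty beyond correctly quoting the two inputs, Theorem~\ref{thmTest} and \cite[2.2.7]{BH}.
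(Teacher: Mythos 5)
Your proof is correct and is exactly what the paper intends, since it calls the corollary ``a direct consequence of Theorem~\ref{thmTest}'' and gives no separate argument. The first assertion follows because $\pd_RM<\infty$ forces $\Tor_{\gg0}^R(M,X)=0$ for \emph{all} $X$, so the hypothesis of Theorem~\ref{thmTest} holds vacuously; and in the second assertion you correctly keep the standing hypothesis in force, observe that the Koszul complex gives $\pd_RS<\infty$, and conclude via $S\in\ttp(R)$ together with \cite[2.2.7]{BH}. Your closing remark that the hypothesis then forces $S$ itself to be regular (via the change-of-rings equality $\pd_RM=\pd_SM+\mathrm{length}(\underline{x})$) is accurate, so the route through Proposition~\ref{1036} is not really a strengthening of the hypothesis but an equivalent reformulation; both arrive at the same place.
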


The category of test modules over complete intersection rings is determined in terms of \emph{complexity}.
Recall that the complexity $\cx_{R}M$ of an $R$-module $M$ is the dimension of the \emph{support variety} $V(M)$ associated to $M$; see \cite{Av1} and \cite{AvBu} for details.
In an earlier version of this article, we made use of Corollary \ref{propRegularextension} and \cite[1.3]{Jo1}, and proved Proposition \ref{propCI} below for complete intersection local rings which are \emph{complete}.
The authors are grateful to Petter Andreas Bergh for explaining them a \emph{completion free} proof of this fact.

\begin{prop}\label{propCI} 
Let $R$ be a local complete intersection.
Then
$$
\ttp(R)= \{M\in \md R\mid\cx_{R}M=\cod R\}.
$$
\end{prop}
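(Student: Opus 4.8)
The plan is to recast everything in terms of support varieties over complete intersections and then settle the statement by elementary geometry of cones. Set $c=\cod R$. For $X\in\md R$ write $V(X)$ for the support variety of $X$; replacing $R$ by $R\otimes_k\overline{k}$ (harmless, since it affects neither complexities nor the truth of the assertion, by faithful flatness) I may regard $V(X)$ as a Zariski-closed conical subset of affine $c$-space $\A^c$ over $\overline{k}$. I would invoke three standard inputs, surveyed in \cite{Av1,AvBu}: (a) $\dim V(X)=\cx_R X$ for every $X$, so that $V(X)=\{0\}$ exactly when $\pd_R X<\infty$; (b) for $X,Y\in\md R$ one has $V(X)\cap V(Y)=\{0\}$ if and only if $\Ext^{\gg 0}_R(X,Y)=0$, which by \cite[6.1]{AvBu} (cf.\ the remark at the start of this section) is in turn equivalent to $\Tor^R_{\gg 0}(X,Y)=0$; and (c) \emph{realizability}: every Zariski-closed cone in $\A^c$ equals $V(Y)$ for some $Y\in\md R$ (see \cite{Be2}). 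Granting (a)--(c), the proposition splits into two brief verifications.

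For the inclusion $\supseteq$ I would argue as follows. If $\cx_R M=c$, then by (a) the cone $V(M)$ has dimension $c$ inside the irreducible variety $\A^c$ of the same dimension, so $V(M)=\A^c$. Hence any $N\in\md R$ with $\Tor^R_{\gg 0}(M,N)=0$ satisfies $V(N)=V(M)\cap V(N)=\{0\}$ by (b), that is $\pd_R N<\infty$; therefore $M\in\ttp(R)$.

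For the inclusion $\subseteq$ I would prove the contrapositive. Suppose $\cx_R M<c$; then $V(M)$ is a proper closed subset of $\A^c$ by (a), so we may pick $v\in\A^c$ with $v\notin V(M)$. The line $\ell=\overline{k}\,v$ through the origin meets $V(M)$ in a cone properly contained in $\ell$, hence in $\{0\}$ alone. By realizability (c) there is $N\in\md R$ with $V(N)=\ell$; then $V(M)\cap V(N)=\{0\}$ forces $\Tor^R_{\gg 0}(M,N)=0$ by (b), while $V(N)=\ell\neq\{0\}$ gives $\pd_R N=\infty$ by (a). Thus $M$ is not a test module, completing the argument.

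I expect the genuine content to reside in the support-variety dictionary itself --- above all in the realizability statement (c) --- together with the point that all of this is available for a complete intersection $R$ that is \emph{not} assumed complete and whose residue field need not be algebraically closed. It is precisely this generality that lets one bypass the completion argument (and the use of \cite[1.3]{Jo1} and of Corollary \ref{propRegularextension}) from the earlier version; beyond that, the only geometry one needs is that affine space is irreducible and that a line contains no Zariski-closed cone other than itself and $\{0\}$.
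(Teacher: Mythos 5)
Your proof is correct and follows essentially the same route as the paper's. For $\subseteq$ you reproduce the published argument: from $\cx_R M<c$ conclude $V(M)\subsetneq\overline k^c$, use Bergh's realizability theorem to produce $N$ with $V(N)$ a positive-dimensional cone meeting $V(M)$ only at the origin (the paper takes a general such $W$; you take a line, which is a harmless specialization), and then invoke the Avramov--Buchweitz dictionary $V(M)\cap V(N)=\{0\}\Leftrightarrow\Tor^R_{\gg0}(M,N)=0$. For $\supseteq$ the paper simply cites Jorgensen's inequality $\cx_RM+\cx_RN\le\cod R$ when $\Tor_{\gg0}=0$, which you instead derive directly from the support-variety picture; the two are the same fact in different clothing.

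Two small cautions. First, the realizability result you call (c) is \cite[2.3]{BerSV}, not \cite{Be2} (both are Bergh, but the latter is his paper on vanishing of homology for modules of finite length). Second, the opening remark about replacing $R$ by $R\otimes_k\overline k$ is unnecessary and slightly misleading: the support varieties of Avramov--Buchweitz and Bergh already live in $\overline k^c$ by construction, so no base change of the ring is needed, and $R\otimes_k\overline k$ is in any case not the standard flat local extension enlarging the residue field. These are cosmetic; the mathematical content matches the paper.
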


\begin{proof}
($\supseteq$):
This follows from \cite[1.2]{Jo2}.

($\subseteq$):
Put $c=\cod R$.
Let $M\in \md R$ with $\cx_{R}(M)<c$.
Then $\dim V(M)<c=\dim \overline{k}^c$, where $\overline{k}$ is the algebraic closure of $k$.
This implies that there exists a closed homogeneous variety $W$ in $\overline{k}^c$ with $\dim W > 0$ and $W \cap V (M) = \{ 0 \}$.
Now, by \cite[2.3]{BerSV}, there exists $N\in \md R$ with $V(N)=W$.
Since $\cx_{R}N = \dim V (N)=\dim W>0$, the $R$-module $N$ has infinite projective dimension.
Recall that $V (N) \cap V (M)= \{ 0 \}$; thus we deduce from \cite[Theorem IV]{AvBu} that $\Tor^R_{\gg 0}(M,N) =0$.
Consequently $M\notin \ttp(R)$.
\end{proof}

Here are some consequences of Proposition \ref{propCI}; the first one is the result of Huneke and Wiegand \cite[1.9]{HW2} discussed in the introduction. 

\begin{cor} \label{cor2propCI}
\begin{enumerate}[\rm(i)]
\item
{\rm(Huneke--Wiegand)}
Let $R$ be a hypersurface.
Then an $R$-module $M$ is in $\ttp(R)$ if and only if $M$ has infinite projective dimension.
\item
Let $R \to S$ be a finite local ring homomorphism of local complete intersections.
If there exists an $S$-module $M$ such that $\cx_{R}M=0$ and $\cx_{S}M=\cod S$, then $\cx_{R}S = \cod R$.
In other words, if $M$ has minimal complexity over $R$ and has maximal complexity over $S$, then $S$ has maximal complexity over $R$.
\end{enumerate}
\end{cor}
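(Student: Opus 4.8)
The plan is to obtain both parts as immediate consequences of Proposition \ref{propCI}, using Corollary \ref{propRegularextension} for part (ii); no new input is required.

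For (i), recall first that a hypersurface is a complete intersection with $\cod R\le 1$, and that the substantive case of the Huneke--Wiegand statement is the singular one, so we assume $\cod R=1$. By Proposition \ref{propCI}, $M\in\ttp(R)$ exactly when $\cx_RM=\cod R=1$. Since over a complete intersection the complexity of any module is bounded above by the codimension, here $\cx_RM\in\{0,1\}$; hence $\cx_RM=1$ is the same as $\cx_RM\ne 0$, which is the same as $\pd_RM=\infty$, because complexity zero characterizes finite projective dimension. Composing these equivalences proves (i).

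For (ii), let $M$ be an $S$-module with $\cx_RM=0$ and $\cx_SM=\cod S$. The first equality says $\pd_RM<\infty$, and the second, by Proposition \ref{propCI} applied to the complete intersection $S$, says $M\in\ttp(S)$. Thus $M$ is a test $S$-module having finite projective dimension over $R$, so Corollary \ref{propRegularextension} yields $S\in\ttp(R)$. Applying Proposition \ref{propCI} to $R$, this means precisely $\cx_RS=\cod R$, which is the assertion; the reformulation in the last sentence of (ii) is just the dictionary ``complexity zero $=$ finite projective dimension'' together with the fact that $\cod S$ is the maximal value the complexity can take over the complete intersection $S$.

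I do not expect any genuine obstacle: both parts are bookkeeping on top of Proposition \ref{propCI} and Corollary \ref{propRegularextension}. The only facts used beyond those results are the standard ones from support-variety theory \cite{Av1,AvBu}, namely that over a complete intersection $0\le\cx_RM\le\cod R$ and that $\cx_RM=0$ if and only if $\pd_RM<\infty$. If one wished to avoid Corollary \ref{propRegularextension} in (ii), one could instead argue directly with the tensor--product identity and minimal free resolutions as in the proof of Theorem \ref{thmTest}, but routing through Corollary \ref{propRegularextension} is the shortest path.
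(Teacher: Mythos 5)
Your proof is correct and takes essentially the same route as the paper: for (ii) you give word-for-word the chain Proposition~\ref{propCI} $\Rightarrow M\in\ttp(S)$, Corollary~\ref{propRegularextension} $\Rightarrow S\in\ttp(R)$, Proposition~\ref{propCI} again $\Rightarrow \cx_RS=\cod R$. The paper's proof only addresses (ii), treating (i) as already established (it is attributed to Huneke--Wiegand and noted just before the corollary as a consequence of Proposition~\ref{propCI}); your derivation of (i) from Proposition~\ref{propCI} via $\cod R\le 1$ and the equivalence ``$\cx_RM=0\Leftrightarrow\pd_RM<\infty$'' is exactly the intended one, and you are right to flag that the statement of (i) only has content in the singular case.
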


\begin{proof}
Only the second claim in (ii) requires a proof.
By Proposition \ref{propCI}, $M\in \ttp(S)$.
Corollary \ref{propRegularextension} implies $S\in \ttp(R)$.
Again by Proposition \ref{propCI}, we have $\cx_{R}S = \cod R$. 
\end{proof}

\begin{rmk}
(1) Local rings over which all modules of infinite projective dimension are test modules are \emph{not} necessarily hypersurfaces.
A natural example of such a ring is a Golod ring that is not Gorenstein (e.g. $\CC[[t^3,t^4,t^5]]$).\\
(2) Test modules do not behave well under localization.
Let $(R, \mathfrak{m})$ be a $2$-dimensional local hypersurface such that $R_{p}$ is not regular for some $p\in \Spec R\setminus\{\mathfrak{m}\}$ (e.g. $R=\CC[[x,y,z]]/(xy)$ and $p=(x,y)R$).
Let $M=\Omega^{2}_{R}k$.
Then $M\in \ttp(R)$ by Corollary \ref{cor2propCI}(i).
However, $M_{p}$ is free over $R_{p}$ and hence $M_{p} \notin \ttp(R_{p})$.\\
(3) Nontest modules do not behave well under localization.
Let $(R, \mathfrak{m})$ be a complete intersection of codimension $2$ such that $R_{p}$ is regular for all $p\in \Spec R\setminus\{\mathfrak{m}\}$ (e.g. $R=\CC[[x, y, z, v]]/(x^2+y^2+z^2+v^2,x^3+y^3+z^3+v^3)$).
Let $M\in \md R$ with $\cx_{R}M=1$ (see \cite[5.7]{AGP}).
Then $M\notin \ttp(R)$ by Proposition \ref{propCI}.
However, $M_{p}\in \ttp(R_{p})$ for all $p\in \Spec R\setminus\{\mathfrak{m}\}$.
\end{rmk}

\section{Homological dimensions of test modules}

In this section we study the existence of test modules of finite homological dimensions.
We start by recalling ${\rm G}_{C}$-dimension; it is a homological invariant for modules, originally introduced by Golod \cite{Golod}, associated to a fixed semidualizing module $C$.

\begin{dfn}\label{GCdim}
Let $C$ be a \emph{semidualizing} $R$-module, i.e., an $R$-module $C$ such that the natural homomorphism $R \to \Hom_{R}(C,C)$ is an isomorphism and $\Ext^{>0}_{R}(C,C)=0$.
An $R$-module $X$ is called \emph{totally $C$-reflexive} if the natural map $X \to \Hom_R(\Hom_R(X,C),C)$ is an isomorphism and $\Ext_{R}^{>0}(X,C)=\Ext_{R}^{>0}(\Hom_R(X,C),C)=0$.
The \emph{${\rm G}_{C}$-dimension} of an $R$-module $M$, denoted by $\GC-dim_RM$, is defined as the infimum of the integers $n\ge0$ such that there exists an exact sequence $0 \to X_{n} \to \cdots  \to X_{0} \to M\to0$, where each $X_{i}$ is totally $C$-reflexive.
\end{dfn}

A totally $R$-reflexive module is simply called \emph{totally reflexive}.
The $\text{G}_{R}$-dimension of $M$ is nothing but \emph{Gorenstein dimension} (\emph{G-dimension} for short) introduced by Auslander and Bridger \cite{AuBr}, and simply denoted by $\G-dim_RM$.
A lot of studies on G-dimension have been done so far.
The details are stated in the book \cite{Larsbook} and the survey article \cite{CFH}.

Corso, Huneke, Katz and Vasconcelos \cite[3.3]{CHKV} proved that integrally closed $\mathfrak{m}$-primary ideals can be used to test for finite projective dimension.
More precisely, they proved that, if $(R, \mathfrak{m})$ is a local ring and $I$ is an an integrally closed $\mathfrak{m}$-primary ideal of $R$, then $\Tor_{n}^{R}(R/I,N)=0$ if and only if $\pd_{R}N<n$.
Goto and Hayasaka \cite[1.1]{GH} proved that, if such an ideal $I$ contains a non zero-divisor of $R$ and $\G-dim_RI<\infty$, then $R$ is Gorenstein.
Thus, integrally closed $\mathfrak{m}$-primary ideals are test modules, and existence of such ideals having finite G-dimension and positive grade forces the ring to be Gorenstein.

The main purpose of this section is to generalize this.
More precisely, we would like to replace the ideal $I$ considered with an arbitrary test module of finite G-dimension and deduce that $R$ is Gorenstein.
For this purpose, we introduce the following category:
$$
\tei(R)=\{M\in \md R\mid\text{all }R\text{-modules }N\text{ with }\Ext^{\gg 0}_{R}(M,N)=0\text{ satisfy }\id_RN<\infty\}.
$$

The theorem below is the main result of this section.
We refer the reader to \cite[V]{Hartshorne}, \cite[A.8]{Larsbook} and \cite[\S1]{Peter} for details of dualizing complexes.

\begin{thm} \label{mainthm1}
Let $R$ be a commutative Noetherian ring (not necessarily local) admitting a dualizing complex.
Then one has $\ttp(R)=\tei(R)$. 
\end{thm}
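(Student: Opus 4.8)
The plan is to prove the two inclusions $\ttp(R)\subseteq\tei(R)$ and $\tei(R)\subseteq\ttp(R)$ by exploiting the duality functor $(-)^{\dagger}=\RHom_R(-,D)$ attached to the dualizing complex $D$, which interchanges projective and injective dimensions and converts $\Tor$ into $\Ext$. Concretely, for finitely generated modules $M,N$ one has natural isomorphisms
$$
\Ext_R^i(M,N^{\dagger})\cong \Tor^R_?(M,N)^{\dagger}
$$
after the appropriate (co)homological reindexing, together with the fact that $\pd_R N<\infty$ if and only if $\id_R N^{\dagger}<\infty$, and that every finitely generated module of finite injective dimension is of the form $N^{\dagger}$ (up to the relevant bookkeeping), since $(-)^{\dagger\dagger}\simeq\id$ on the derived category of bounded complexes with finitely generated cohomology. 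The key point is that $D$ is a bounded complex, so $\RHom_R(-,D)$ preserves the property of being (isomorphic in the derived category to) a module, at least after truncation, and preserves finiteness of homological dimensions.

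First I would fix the precise statement of the duality: if $D$ is a dualizing complex for $R$, then $(-)^{\dagger}\colon \mathbf{D}^{\mathrm{b}}_{\mathrm f}(R)\to\mathbf{D}^{\mathrm{b}}_{\mathrm f}(R)$ is a contravariant equivalence with $(-)^{\dagger\dagger}\simeq\mathrm{id}$, and it satisfies $\RHom_R(M,X^{\dagger})\simeq (M\tensor_R X)^{\dagger}$ for $M,X\in\mathbf{D}^{\mathrm b}_{\mathrm f}(R)$. Second, I would record the homological-dimension translations: for a finitely generated module $N$, $\pd_RN<\infty\iff\fd_RN<\infty\iff\id_R(N^{\dagger})<\infty$, and conversely a finitely generated $N$ has $\id_RN<\infty$ exactly when $N^{\dagger}$ (after a shift) is a module of finite projective dimension; these are standard facts about dualizing complexes that I would cite from \cite{Hartshorne} or \cite{Larsbook}. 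Third, given $M\in\ttp(R)$ and a module $N$ with $\Ext^{\gg0}_R(M,N)=0$, I would write $N\simeq (N^{\dagger})^{\dagger}$, set $X=N^{\dagger}$, and translate $\Ext^{\gg0}_R(M,N)=0$ into $\Tor^R_{\gg0}(M,X)=0$ via the isomorphism above (here one must be careful that $X$ is a bounded complex, not necessarily a module, so one works with each cohomology module of $X$ or with the complex directly, noting that vanishing of $\Tor$ in high degrees against a bounded complex is equivalent to vanishing against each of its cohomology modules); since $M$ is a test module this forces $\pd_R X<\infty$ in the derived sense, hence $\id_R N<\infty$. The reverse inclusion is entirely symmetric, using that $(-)^{\dagger}$ is an equivalence and that $M$ itself, being a module of finite G-dimension-free type, is unaffected.

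The main obstacle I anticipate is bookkeeping rather than conceptual: $N^{\dagger}$ is a complex with possibly several nonzero cohomology modules spread over a range of degrees depending on $\dim R$, so the clean statement ``$\Ext^{\gg0}_R(M,N)=0\iff\Tor^R_{\gg0}(M,N^{\dagger})=0$'' needs to be phrased either for complexes (defining $\Tor$ and $\Ext$ for complexes and observing that high-degree vanishing is detected on a bounded complex iff on all its cohomologies) or by dévissage along the finitely many cohomology modules $\Ho^i(N^{\dagger})$, each of which is again finitely generated. A related subtlety is that ``finite projective dimension'' and ``finite injective dimension'' must be taken in the derived-category sense for complexes, and one must check that for a complex $X\in\mathbf{D}^{\mathrm b}_{\mathrm f}(R)$ one has $\fd_R X<\infty\iff\fd_R\Ho^i(X)<\infty$ for all $i$ when $X$ has finite homological amplitude—but this, too, is standard. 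Once the translation dictionary is set up carefully, the proof is a two-line symmetric argument; I would therefore devote most of the write-up to stating the duality facts precisely and to the dévissage step that reduces complexes to modules.
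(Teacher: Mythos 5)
Your overall strategy, namely using the duality $(-)^\dagger = \RHom_R(-,D)$ to trade $\Ext$-vanishing for $\Tor$-vanishing, is indeed the approach the paper takes, and the adjunction isomorphism together with the translations between $\pd$, $\fd$ and $\id$ under duality are fine as far as they go. The gap is in the step where you descend from the complex $X = N^\dagger$ back to modules so that the test hypothesis, which speaks only of modules, can be invoked.

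You assert that for a bounded complex $X$ with bounded finitely generated cohomology, vanishing of $\Tor^R_{\gg 0}(M,X)$ is equivalent to vanishing of $\Tor^R_{\gg 0}(M,\Ho^i(X))$ for each $i$, and likewise that $\fd_R X < \infty$ if and only if $\fd_R \Ho^i(X) < \infty$ for all $i$. Both claims fail in the direction you need. Take $R = k[x]/(x^2)$, $M = k$, and $X = (R \xrightarrow{x} R)$, a two-term complex of free modules. Its homology is $\Ho_0(X) \cong \Ho_1(X) \cong k$, and $M \tensor_R X \simeq (k \xrightarrow{0} k)$ has bounded homology, so $\Tor^R_{\gg 0}(M,X) = 0$ in the derived sense; yet $\Tor^R_i(M,\Ho_0(X)) = \Tor^R_i(k,k) \cong k \neq 0$ for every $i \ge 0$. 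Similarly $\fd_R X < \infty$ because $X$ is a bounded complex of free modules, while $\fd_R \Ho_i(X) = \fd_R k = \infty$. The hyperhomology spectral sequence $\Tor^R_p(M,\Ho_q(X)) \Rightarrow \Ho_{p+q}(M \tensor_R X)$ only gives you the easy implication; its $E_2$-terms can be nonzero in arbitrarily high degree and cancel against one another on later pages, which is exactly what happens in this example.

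The correct reduction from the complex to a module, and this is where the paper's proof does its real work, is to take a projective resolution $F$ of the complex $Y := \RHom(X,D)$, choose $n$ beyond which both $\Ho(F)$ and $\Ho(M \otimes_R F)$ vanish, and pass to the hard truncation $Q = (\cdots \to F_{n+1} \to F_n \to 0)$, which is a shifted projective resolution of the single module $W := \coker\partial_{n+1}$ with $\Tor^R_{>0}(M,W)=0$. The test hypothesis applied to $W$ gives $\pd_R W < \infty$, and since the discarded piece of $F$ is a bounded complex of projectives, $F$ itself, and hence $Y$, has finite projective dimension; duality then gives $\id_R X < \infty$. Replacing your cohomology-module d\'evissage by this syzygy-of-a-complex truncation repairs the gap and reproduces the paper's argument.
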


\begin{proof}
Let $D$ be a dualizing complex of $R$.
Let $M\in \ttp(R)$, and let $X\in \md R$ such that $\Ext_{R}^{\gg 0}(M,X)=0$.
By \cite[A.4.24]{Larsbook} we have an isomorphism in the derived category of $R$:
$$
M\tensor_{R}Y\simeq\RHom(\RHom(M,X),D),
$$
where $Y:=\RHom(X, D)$ is a homologically bounded complex.
Since $\RHom(M,X)$ is homologically bounded, so is $M\tensor_{R}Y$.
Take a projective resolution of $Y$:
$$
(F,\partial)=(\cdots\to F_i\xrightarrow{\partial_i} F_{i-1} \to \cdots \to F_{t+1} \xrightarrow{\partial_{t+1}} F_t \to 0).
$$
As $\Ho_{\gg0}(F)=0=\Ho_{\gg0}(M\otimes_{R}F)$, we can choose an integer $n$ such that the truncation $Q=( \dots \to F_{n+1} \xrightarrow{\partial_{n+1}} F_{n}\to 0)$ of $F$ is a projective resolution of $N:=\coker\partial_{n+1}$ and that $\Tor^{R}_{>0}(M, N)=0$.
Since $M\in \ttp(R)$, this implies $\pd_RN<\infty$, and hence $\pd_RQ<\infty$ as $Q\simeq \Sigma ^{n}N$.
There is a short exact sequence of complexes (\cite[A.1.17]{Larsbook}):
$$
0 \to P \to F \to Q \to 0,
$$
where $P:=(0 \to F_{n-1} \to \dots \to F_{t}\to 0)$.
Since $\pd_RP<\infty$, we have $\pd_RF<\infty$, and hence $\pd_RY<\infty$.
Thus $\id_R\RHom(Y,D)<\infty$.
We have an isomorphism $X\simeq\RHom(Y,D)$ by \cite[V.2.1]{Hartshorne}, which yields $\id_RX<\infty$.
Therefore $M\in\tei(R)$.

Conversely, let $M\in \tei(R)$.
Let $X\in \md R$ such that $\Tor^{R}_{\gg 0}(M,X)=0$.
Similarly to the above, we can prove that $\fd_RX<\infty$; use the isomorphism $\RHom(M\tensor_{R}X,D)\simeq \RHom(M,\RHom(X,D))$ (\cite[A.4.21]{Larsbook}).
We obtain $\pd_RX<\infty$; see \cite[1.6]{Peter}.
\end{proof}

\begin{cor}\label{propmainthm0}
Let $R$ be a commutative Noetherian ring with a dualizing complex.
Assume there exist $M\in \ttp(R)$ and $N\in \md R$ with $\Supp N=\Spec R$ and $\Ext^{\gg 0}_{R}(M,N)=0$.
Then $R$ is Cohen-Macaulay.
If moreover $\pd_RN<\infty$, then $R$ is Gorenstein. 
\end{cor}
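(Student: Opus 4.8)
The plan is to extract everything from Theorem \ref{mainthm1} together with classical structure theorems for modules of finite injective dimension. First I would apply Theorem \ref{mainthm1}, which is available since $R$ has a dualizing complex, to conclude that $M\in\tei(R)$. By the very definition of $\tei(R)$, the hypothesis $\Ext^{\gg0}_R(M,N)=0$ then forces $\id_RN<\infty$. So the role of the test module $M$ is exactly to convert the assumed vanishing of Ext into a finite injective dimension for $N$; this is the only place the hypothesis $M\in\ttp(R)$ is used.

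Next I would localize. Fix $\mathfrak{p}\in\Spec R$. Since $\Supp N=\Spec R$ we have $N_\mathfrak{p}\neq0$, and because injective dimension does not increase under localization, $N_\mathfrak{p}$ is a nonzero finitely generated $R_\mathfrak{p}$-module with $\id_{R_\mathfrak{p}}N_\mathfrak{p}<\infty$. By Bass's theorem (the resolved Bass conjecture, due to Peskine--Szpiro, Roberts and Foxby), $R_\mathfrak{p}$ is Cohen--Macaulay. As $R$ admits a dualizing complex it has finite Krull dimension, and a ring all of whose localizations are Cohen--Macaulay and which has finite dimension is Cohen--Macaulay; hence $R$ is Cohen--Macaulay.

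For the final assertion, suppose in addition that $\pd_RN<\infty$. Then, for every $\mathfrak{p}\in\Spec R$, the module $N_\mathfrak{p}$ is a nonzero finitely generated $R_\mathfrak{p}$-module of both finite projective dimension and finite injective dimension (projective dimension also does not grow under localization). By the classical theorem of Foxby characterizing Gorenstein local rings, this implies $R_\mathfrak{p}$ is Gorenstein. Since this holds for all primes $\mathfrak{p}$, the ring $R$ is Gorenstein.

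The substantive input is Theorem \ref{mainthm1}; once the passage from $\Ext^{\gg0}_R(M,N)=0$ to $\id_RN<\infty$ is secured, no serious obstacle remains, as the rest is a routine localization argument invoking standard results. The only points needing a little care are that $\id_R$ and $\pd_R$ do not increase under localization and that $\Supp N=\Spec R$ guarantees $N_\mathfrak{p}\neq0$ for every $\mathfrak{p}$, so that Bass's theorem and Foxby's theorem genuinely apply at each prime.
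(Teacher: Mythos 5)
Your proof is correct and follows essentially the same route as the paper: apply Theorem \ref{mainthm1} to pass from the Ext vanishing to $\id_R N<\infty$, then localize and invoke Bass's theorem for the Cohen--Macaulay statement and the finite projective-plus-injective dimension characterization (Foxby) for the Gorenstein one. The only cosmetic difference is that you spell out the reduction to the local case and mention finite Krull dimension, whereas the paper takes the passage from ``$R_\mathfrak{p}$ Cohen--Macaulay (resp.\ Gorenstein) for all $\mathfrak{p}$'' to ``$R$ Cohen--Macaulay (resp.\ Gorenstein)'' as the standard definition.
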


\begin{proof}
Since $M\in \tei(R)$ by Theorem \ref{mainthm1}, it holds that $\id_RN<\infty$.
For all $p\in \Spec R$, we have $N_{p} \neq 0$ and $\id_{R_p}N_{p}<\infty$.
The theorem called ``Bass's conjecture'' \cite[9.6.2 and 9.6.4]{BH} yields that $R_{p}$ is Cohen-Macaulay, and so is $R$.
Now assume $\pd_RN<\infty$.
Then $N_{p}$ is a nonzero $R_p$-module of finite projective and injective dimensions for all $p\in \Spec R$.
Hence $R_{p}$ is Gorenstein by \cite[3.1.25]{BH}, and so is $R$.
\end{proof}

In the next corollary, under the hypothesis that the ring considered has a dualizing complex, we obtain a generalization of the result due to Corso--Huneke--Katz--Vasconcelos and Goto--Hayasaka, which accomplishes our main purpose of this section.

\begin{cor}\label{cor1mainthm1}
Let $R$ be a ring with a dualizing complex and $M$ a test module.
\begin{enumerate}[\rm(i)]
\item
If $\GC-dim_RM<\infty$ for some semidualizing module $C$, then $R$ is Cohen-Macaulay.
\item
If $\G-dim_{R}M<\infty$, then $R$ is Gorenstein.
\end{enumerate}
\end{cor}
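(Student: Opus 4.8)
The plan is to deduce both parts from Corollary \ref{propmainthm0}; the only work is to produce, from a test module $M$ with $\GC-dim_RM<\infty$ (respectively $\G-dim_RM<\infty$), a module $N$ with $\Supp N=\Spec R$ and $\Ext^{\gg 0}_R(M,N)=0$, arranging moreover in part (ii) that $\pd_RN<\infty$.

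For part (i) I would take $N=C$. Since $\GC-dim_RM<\infty$, fix an exact sequence $0\to X_n\to\cdots\to X_0\to M\to 0$ with every $X_i$ totally $C$-reflexive; by definition $\Ext^{>0}_R(X_i,C)=0$ for each $i$. Splitting this into short exact sequences and shifting dimension one step at a time, one gets $\Ext^{j}_R(M,C)\cong\Ext^{j-n}_R(X_n,C)=0$ for all $j>n$, hence $\Ext^{\gg 0}_R(M,C)=0$. It remains to note $\Supp C=\Spec R$: for $p\in\Spec R$ the defining isomorphism $R\xrightarrow{\sim}\Hom_R(C,C)$ localizes to $R_p\xrightarrow{\sim}\Hom_{R_p}(C_p,C_p)$, whose target vanishes if $C_p=0$, forcing $C_p\neq 0$. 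Applying Corollary \ref{propmainthm0} with this $N=C$ shows $R$ is Cohen-Macaulay.

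For part (ii) I would specialize to $C=R$, which is a semidualizing module with $\GC-dim_RM=\G-dim_RM$. The same dimension-shifting argument along a finite resolution of $M$ by totally reflexive modules gives $\Ext^{\gg 0}_R(M,R)=0$, while trivially $\Supp R=\Spec R$ and $\pd_RR=0<\infty$. So the ``moreover'' clause of Corollary \ref{propmainthm0}, applied with $N=R$, yields that $R$ is Gorenstein.

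I do not anticipate a genuine obstacle here: the substance is carried by Theorem \ref{mainthm1} (via Corollary \ref{propmainthm0} and ``Bass's conjecture''), and what is left are the two soft facts that finiteness of $\GC-dim_RM$ forces $\Ext^{\gg 0}_R(M,C)=0$ and that a semidualizing module has full support. The only spot needing minor care is the inductive bookkeeping in the dimension shift, checking that the vanishing $\Ext^{>0}_R(X_i,C)=0$ propagates correctly through each short exact sequence; this is routine.
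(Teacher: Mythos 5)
Your proof is correct and follows essentially the same route as the paper: both parts reduce to Corollary~\ref{propmainthm0} by taking $N=C$ (respectively $N=R$), noting that $\Supp C=\Spec R$ from the semidualizing isomorphism and that finiteness of $\GC-dim$ forces $\Ext_R^{\gg 0}(M,C)=0$. You merely spell out the dimension-shifting and localization details that the paper condenses into ``it is easy to see.''
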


\begin{proof}
(i) Since $\Hom_R(C,C)\cong R$, we have $\Supp C=\Spec R$.
It is easy to see that $\Ext_{R}^{\gg 0}(M,C)=0$.
By Corollary \ref{propmainthm0}, $R$ is Cohen-Macaulay.

(ii) This follows from Corollary \ref{propmainthm0}.
\end{proof}

The conclusion of Corollary \ref{cor1mainthm1} naturally raises the following question.
We refer to \cite{AGP} for details of the {\em complete intersection dimension} $\CI_R$.

\begin{ques}\label{QCI}
Let $R$ be a local ring.
Let $M$ be a test module with $\CI_RM<\infty$.
Then must $R$ be a complete intersection?
\end{ques}

\noindent
Corollary \ref{propRegularextension} more or less supports affirmativeness of this question, but we do not know the answer.
The difficulty we face here is that we do not know whether the property of being a test module is preserved under local flat extensions, even under completion.

Next we investigate the category $\tei(R)$ when $R$ is Cohen-Macaulay.
Set
$$
\tep(R)=\{M\in \md R\mid \text{all }R\text{-modules }N\text{ with }\Ext^{\gg 0}_{R}(N,M)=0\text{ satisfy }\pd_RN<\infty\}.
$$

\begin{prop} \label{cor7mainthm1}
Let $R$ be a Cohen-Macaulay local ring with a canonical module $\omega$.
Put $(-)^{\dagger}=\Hom_{R}(-,\omega)$.
For each maximal Cohen-Macaulay $R$-module $M$ one has:
$$
M \in \ttp(R)\ \Longleftrightarrow\ M^{\dagger} \in \tep(R).
$$
\end{prop}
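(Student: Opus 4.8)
The plan is to translate the statement, by means of the canonical module, into one about the categories $\tei(R)$ and $\tep(R)$ introduced above. Since a Cohen--Macaulay local ring with canonical module $\omega$ admits a dualizing complex (a shift of $\omega$), Theorem \ref{mainthm1} gives $\ttp(R)=\tei(R)$, so it suffices to prove that, for maximal Cohen--Macaulay $M$, one has $M\in\tei(R)$ if and only if $M^{\dagger}\in\tep(R)$. First I would record the tools, all for $(-)^{\dagger}=\Hom_R(-,\omega)$: (a) if $X$ is maximal Cohen--Macaulay then so is $X^{\dagger}$, the biduality map $X\to X^{\dagger\dagger}$ is an isomorphism, and $\Ext_R^{>0}(X,\omega)=0$ (see \cite{BH}); (b) for maximal Cohen--Macaulay $X,Y$ there are isomorphisms $\Ext_R^i(X,Y^{\dagger})\cong\Ext_R^i(Y,X^{\dagger})$ for all $i$, obtained from the derived adjunctions
\[
\RHom_R(X,\RHom_R(Y,\omega))\simeq\RHom_R(X\tensor_RY,\omega)\simeq\RHom_R(Y,\RHom_R(X,\omega))
\]
(cf.\ \cite[A.4.21]{Larsbook}) together with $\RHom_R(X,\omega)\simeq X^{\dagger}$ and $\RHom_R(Y,\omega)\simeq Y^{\dagger}$; and (c) for maximal Cohen--Macaulay $X$ one has $\pd_RX<\infty$ if and only if $\id_RX^{\dagger}<\infty$, a consequence of the standard property of a dualizing complex $D$ that $\pd_RZ<\infty\Leftrightarrow\id_R\RHom_R(Z,D)<\infty$ (the sort of fact already used in the proof of Theorem \ref{mainthm1}), applied with $D$ a shift of $\omega$ and $Z=X$, so that $\RHom_R(X,D)$ is a shift of $X^{\dagger}$.

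For the implication $M\in\tei(R)\Rightarrow M^{\dagger}\in\tep(R)$: let $P\in\md R$ satisfy $\Ext_R^{\gg0}(P,M^{\dagger})=0$. Put $d=\depth R$ and $L=\Omega_R^dP$; then $L$ is maximal Cohen--Macaulay, and dimension shifting in the first argument gives $\Ext_R^{\gg0}(L,M^{\dagger})=0$. By (b), $\Ext_R^{\gg0}(M,L^{\dagger})=0$, and since $M\in\tei(R)$ this yields $\id_RL^{\dagger}<\infty$; by (c), $\pd_RL<\infty$, hence $\pd_RP<\infty$. Thus $M^{\dagger}\in\tep(R)$.

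For the converse I would descend to maximal Cohen--Macaulay modules using Cohen--Macaulay approximations. Assume $M^{\dagger}\in\tep(R)$ and let $N\in\md R$ with $\Ext_R^{\gg0}(M,N)=0$. By the Auslander--Buchweitz approximation theorem there is an exact sequence $0\to Y\to N'\to N\to0$ with $N'$ maximal Cohen--Macaulay and $\id_RY<\infty$. From the long exact sequence of $\Ext_R(M,-)$ and the vanishing $\Ext_R^{\gg0}(M,Y)=0$ we get $\Ext_R^{\gg0}(M,N')=0$. Applying (b) to $M$ and the maximal Cohen--Macaulay module $(N')^{\dagger}$, and using $(N')^{\dagger\dagger}\cong N'$, gives $\Ext_R^{\gg0}((N')^{\dagger},M^{\dagger})=0$; since $M^{\dagger}\in\tep(R)$ we obtain $\pd_R(N')^{\dagger}<\infty$, and then $\id_RN'<\infty$ by (c). Finally the exact sequence $0\to Y\to N'\to N\to0$ forces $\id_RN<\infty$, so $M\in\tei(R)=\ttp(R)$.

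The conceptual heart is the Ext-swap (b); the remainder is bookkeeping, but the bookkeeping is genuinely asymmetric and that asymmetry is what I would watch most carefully. Syzygies are harmless for detecting finite projective dimension but destroy information about injective dimension --- the free module $F$ appearing in $0\to\Omega_R^dP\to F\to\cdots\to P\to 0$ has infinite injective dimension unless $R$ is Gorenstein --- so the forward direction can reduce to maximal Cohen--Macaulay modules merely by taking syzygies, whereas the converse must invoke the Cohen--Macaulay approximation, whose kernel is required to have finite injective dimension. The other delicate points are that each reduction must simultaneously preserve the relevant $\Ext$-vanishing and transport the finiteness of homological dimension in the correct direction, and that tool (c) may only be applied to maximal Cohen--Macaulay modules, where $\RHom_R(-,\omega)$ degenerates to the module $(-)^{\dagger}$.
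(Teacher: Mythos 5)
Your proof is correct, but the route is genuinely different from the one in the paper. The paper argues directly at the level of $\ttp(R)$ and $\tep(R)$: from the derived isomorphisms $\RHom_R(X\tensor_RM,\omega)\simeq\RHom_R(X,M^{\dagger})$ and $\RHom_R(\RHom_R(X,M^{\dagger}),\omega)\simeq X\tensor_RM$ it extracts two spectral sequences, ${}^1E_2^{p,q}=\Ext_R^p(\Tor_q^R(X,M),\omega)\Rightarrow\Ext_R^{p+q}(X,M^{\dagger})$ and ${}^2E_2^{p,q}=\Ext_R^p(\Ext_R^{-q}(X,M^{\dagger}),\omega)\Rightarrow\Tor_{-p-q}^R(X,M)$, and since $\omega$ has finite injective dimension the two sequences together give the clean statement that $\Tor^R_{\gg0}(X,M)=0$ if and only if $\Ext^{\gg0}_R(X,M^{\dagger})=0$ for \emph{every} $X\in\md R$, with no reduction to maximal Cohen--Macaulay test modules required; the equivalence of the proposition is then a one-line translation of the two definitions. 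You instead pass through $\tei(R)$ by invoking Theorem~\ref{mainthm1}, replace the spectral sequences by the symmetric Ext-swap $\Ext_R^i(X,Y^{\dagger})\cong\Ext_R^i(Y,X^{\dagger})$ on maximal Cohen--Macaulay modules, and handle the reduction to that class asymmetrically --- syzygies in one direction, Auslander--Buchweitz maximal Cohen--Macaulay approximations in the other, with the duality $\pd_RX<\infty\Leftrightarrow\id_RX^{\dagger}<\infty$ doing the bookkeeping. Both arguments are sound; the paper's is shorter and self-contained, while yours imports Theorem~\ref{mainthm1} and the approximation theorem but makes explicit the $\pd$/$\id$ duality underlying the statement, and you are right that the asymmetry between the two reductions is the place where a careless version of this argument would go wrong.
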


\begin{proof}
There are isomorphisms in the derived category of $R$: $\RHom_R(X\tensor_RM,\omega)\cong\RHom_R(X,M^\dag)$ and $\RHom_R(\RHom_R(X,M^\dag),\omega)\cong X\tensor_R\RHom_R(M^\dag,\omega)\cong X\tensor_RM$; see \cite[A.4.21 and A.4.24]{Larsbook}.
These give rise to spectral sequences
\begin{align*}
&{}^1E_2^{p,q}=\Ext_R^p(\Tor_q^R(X,M),\omega) \Rightarrow \Ho^{p+q}=\Ext_R^{p+q}(X,M^\dag)\quad\text{and}\\
&{}^2E_2^{p,q}=\Ext_R^p(\Ext_R^{-q}(X,M^\dag),\omega) \Rightarrow \Ho^{p+q}=\Tor_{-p-q}^R(X,M).
\end{align*}
By using these sequences one can easily deduce the equivalence.
\end{proof}

Recall that a local ring $R$ is called {\em G-regular} (\cite{greg}) if $\G-dim_RM=\pd_RM$ for all $R$-modules $M$.
The above proposition gives a sufficient condition for a local ring to be G-regular in terms of test modules.

\begin{cor}\label{testw}
Let $R$ be a Cohen-Macaulay local ring with a canonical module $\omega$.
If $\omega$ is a test module, then $R$ is G-regular.
\end{cor}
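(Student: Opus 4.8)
The plan is to obtain this as an immediate consequence of Proposition \ref{cor7mainthm1}, applied to the canonical module itself. Recall that $\omega$ is a maximal Cohen-Macaulay $R$-module and that $\omega^{\dagger}=\Hom_R(\omega,\omega)\cong R$ by the defining property of a canonical module. Hence, if $\omega$ is a test module, then Proposition \ref{cor7mainthm1} applied with $M=\omega$ yields $R=\omega^{\dagger}\in\tep(R)$; in other words, every $R$-module $N$ satisfying $\Ext^{\gg0}_R(N,R)=0$ has finite projective dimension.

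Next I would reduce G-regularity to this vanishing property. Since the inequality $\G-dim_RM\le\pd_RM$ holds for every $R$-module $M$, and forces equality whenever $\pd_RM=\infty$, it suffices to check that $\pd_RM<\infty$ whenever $\G-dim_RM<\infty$. So assume $g:=\G-dim_RM<\infty$ and set $N=\Omega^g_RM$; this syzygy is totally reflexive, so by definition $\Ext^{>0}_R(N,R)=0$, and in particular $\Ext^{\gg0}_R(N,R)=0$. The membership $R\in\tep(R)$ established in the first step now forces $\pd_RN<\infty$, whence $\pd_RM\le g+\pd_RN<\infty$. Together with $\G-dim_RM<\infty$ this gives $\G-dim_RM=\pd_RM$ (both sides equal $\depth R-\depth M$ by the Auslander--Bridger and Auslander--Buchsbaum formulas), and since $M$ was arbitrary, $R$ is G-regular.

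I do not anticipate any genuine obstacle: once Proposition \ref{cor7mainthm1} is in hand, the only ingredients are the standard facts that $\Hom_R(\omega,\omega)\cong R$ and that $\omega$ is maximal Cohen-Macaulay, so that the proposition applies to $M=\omega$, together with the routine observation that a high enough syzygy of a module of finite G-dimension is totally reflexive and hence has vanishing positive $\Ext$ into $R$. The mild point to be careful about is that we are invoking Proposition \ref{cor7mainthm1} in the direction ``$M\in\ttp(R)\Rightarrow M^{\dagger}\in\tep(R)$'' precisely at $M=\omega$, which is legitimate since $\omega$ is MCM.
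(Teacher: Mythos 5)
Your proof is correct and follows essentially the same route as the paper: apply Proposition \ref{cor7mainthm1} to $M=\omega$ (using $\omega^{\dagger}\cong R$ and $\omega$ maximal Cohen--Macaulay) to get $R\in\tep(R)$, then use the vanishing $\Ext^{\gg0}_R(M,R)=0$ for modules of finite G-dimension to conclude $\pd_RM<\infty$. The only cosmetic difference is that you pass to a totally reflexive syzygy to see the Ext vanishing, whereas the paper invokes it directly for $M$.
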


\begin{proof}
Proposition \ref{cor7mainthm1} implies $R\in \tep(R)$.
Let $M$ be an $R$-module.
If $\G-dim_RM<\infty$, then $\Ext_R^{\gg0}(M,R)=0$, and hence $\pd_RM<\infty$.
This shows $\G-dim_RM=\pd_RM$.
\end{proof}

\section{Categorical approaches for nontest modules}

In this section we continue studying the homological properties of test modules, with a special attention to the full subcategory $\NT(R)$ of $\md R$ consisting of nontest modules:
$$
\NT(R)=\ttp(R)^{\rm c}=\{M\in \md R\mid\Tor_{\gg 0}^{R}(M,N)=0 \text{ for some $R$-module } N \notin \fpd(R) \},
$$
where $\fpd(R)$ denotes the full subcategory of $\md R$ consisting of modules of finite projective dimension.
Note that unless $R$ is regular one has:
$$
\NT(R)\supseteq\fpd(R).
$$

We begin with considering closedness of $\NT(R)$ under (finite) direct sums.
First we confirm that it does not always hold:

\begin{eg} \label{notclosed}
Let $k$ be a field and put $R=k[x,y,z]/(x^2, y^2, z^2, yz)$.
Then $R$ is a non-Gorenstein local ring such that the cube of the maximal ideal is zero.
Let $M=R/(x)$ and $N=R/(y,z)$.
Then $M,N \notin \fpd(R)$ and $\Tor_{>0}^{R}(M,N)=0$, hence $M, N \in \NT(R)$.
Suppose $\Tor_{\gg 0}^{R}(M \oplus N,L)=0$ for some $L\in \md R$.
There are exact sequences $0 \to k^{2} \to M \to k \to 0$ and $0 \to k \to N \to k \to 0$.
The first sequence implies $\Tor_{i+1}^{R}(k,L) \cong \Tor_{i}^{R}(k^2,L)$, and hence $\beta^{R}_{i+1}(L) = 2\beta^{R}_{i}(L)$ for $i\gg0$.
Similarly it follows from the second sequence that $\beta^{R}_{i+1}(L) = \beta^{R}_{i}(L)$ for $i\gg0$.
Such equalities of Betti numbers of $L$ can occur only when $\pd_RL<\infty$.
This proves that $M\oplus N$ is a test module, i.e., $M\oplus N \notin \NT(R)$.
\end{eg}

In general we have the following result.

\begin{prop}\label{propdirectsum}
Let $(R,\mathfrak m,k)$ be a non-Gorenstein local ring with $\mathfrak{m}^3=0$.
Let $M$ be a nonfree totally reflexive $R$-module.
Then $M,E_R(k) \in \NT(R)$ and $M\oplus E_{R}(k)\notin \NT(R)$.
\end{prop}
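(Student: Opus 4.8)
The plan is to establish three things: that $M \in \NT(R)$, that $E_R(k) \in \NT(R)$, and that $M \oplus E_R(k) \notin \NT(R)$. For the first claim, I would argue that a nonfree totally reflexive module over a non-Gorenstein ring cannot be a test module; indeed, if $M$ were a test module, then since $\G\text{-dim}_R M = 0 < \infty$, Corollary \ref{cor1mainthm1}(ii) would force $R$ to be Gorenstein (note that an $\mathfrak{m}^3 = 0$ ring is complete, hence admits a dualizing complex), a contradiction. So $M \in \NT(R)$. For $E_R(k)$: over an Artinian ring, $E_R(k)$ is the injective hull of the residue field, and Matlis duality sends a minimal free resolution of $k$ to a minimal injective resolution, so $\id_R E_R(k) = \pd_R k = \infty$ since $R$ is not regular; more directly, one checks that $\Ext^{\gg 0}_R(k, E_R(k)) = 0$ trivially (Matlis duality makes $E_R(k)$ injective only when... no) — rather I would use that $\Tor^R_i(E_R(k), k) \cong \operatorname{Hom}_k(\Ext^i_R(k,k)^\vee, k)$-type identities, or simply invoke that $E_R(k)$ is a test module would again force $R$ Gorenstein via Corollary \ref{cor1mainthm1}, provided we exhibit a module of finite G-dimension with appropriate Ext-vanishing against it; cleaner: show directly $\Tor^R_{\gg 0}(E_R(k), N) = 0$ for some $N \notin \fpd(R)$, using that over an Artinian non-Gorenstein ring $E_R(k)$ has infinite projective dimension and pairing it with a suitable totally reflexive module — here $M$ itself works if $\Tor^R_{\gg 0}(E_R(k), M) = 0$, which by Matlis duality relates to $\Ext^{\gg 0}_R(M, R) = 0$, true since $M$ is totally reflexive. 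So $E_R(k) \in \NT(R)$.

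The heart of the proposition is the last claim: $M \oplus E_R(k)$ \emph{is} a test module. Suppose $\Tor^R_{\gg 0}(M \oplus E_R(k), L) = 0$ for some $L \in \md R$; I must deduce $\pd_R L < \infty$. From $\Tor^R_{\gg 0}(M, L) = 0$ and $M$ totally reflexive, together with $\Tor^R_{\gg 0}(E_R(k), L) = 0$, I would extract rigidity/growth constraints on the Betti numbers $\beta_i^R(L)$, in the spirit of Example \ref{notclosed}. The key input is the structure of minimal free resolutions over a ring with $\mathfrak{m}^3 = 0$: by results of Lescot and others, such rings (when not Gorenstein) are Golod, or close to it, and the Poincaré series of $k$ is rational with denominator $1 - et + (e - h)t^2$ where $e = \dim_k \mathfrak{m}/\mathfrak{m}^2$ and $h = \dim_k \mathfrak{m}^2$. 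The vanishing $\Tor^R_{\gg 0}(E_R(k), L) = 0$ should translate, via Matlis duality ($\Tor^R_i(E_R(k), L) \cong \Ext^i_R(L, R)^\vee$ when $L$ has no... ) — more carefully, $\Tor^R_i(E_R(k), L) \cong E_R(\Ext^i_R(L,R))$ is not quite standard, so I would instead use $\operatorname{Hom}_R(\Tor^R_i(E_R(k),L), E_R(k)) \cong \Ext^i_R(L, \operatorname{Hom}_R(E_R(k),E_R(k))) \cong \Ext^i_R(L, \widehat{R}) = \Ext^i_R(L,R)$, giving $\Tor^R_{\gg 0}(E_R(k), L) = 0 \iff \Ext^{\gg 0}_R(L, R) = 0$, i.e. $\G\text{-dim}_R L < \infty$ (over an Artinian ring, eventual vanishing of $\Ext(L,R)$ forces $\G\text{-dim}$ finite, hence zero). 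So $L$ is totally reflexive. Now $\Tor^R_{\gg 0}(M, L) = 0$ with both $M$ and $L$ totally reflexive — over a Golod non-Gorenstein ring, by work of Jorgensen–Şega or the theory of Gorenstein-projective modules, such Tor-vanishing between totally reflexive modules forces one of them to be free; combined with $L$ totally reflexive this gives $\pd_R L < \infty$, hence $L$ free. Therefore $M \oplus E_R(k)$ is a test module.

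The main obstacle I anticipate is the final step: showing that $\Tor^R_{\gg 0}(M, L) = 0$ with $M$ nonfree totally reflexive and $L$ totally reflexive forces $L$ free. This needs a genuine rigidity statement for totally reflexive modules over a non-Gorenstein ring with $\mathfrak{m}^3 = 0$. The cleanest route is probably: such a ring, if it admits a nonfree totally reflexive module, is known (Yoshino, Christensen–Piepmeyer–Striuli–Veliche) to have very restricted structure — in fact the totally reflexive modules are \emph{periodic} of period $\le 2$ and their minimal resolutions are eventually linear, so $\beta_i^R(M)$ is constant (or bounded) for $i \gg 0$; pairing this with $L$ totally reflexive and the Golod property of $R$, the vanishing $\Tor^R_{\gg 0}(M,L)=0$ contradicts the multiplicative lower bounds on Betti numbers forced by the non-Gorenstein Golod structure unless $L$ is free. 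I would carry this out via the Poincaré-series inequality $P^R_{M \otimes_? }$ — more concretely by comparing $\beta^R_i(L)$ against the exponential growth rate of $\beta^R_i(M)$, exactly as in Example \ref{notclosed} but using the totally reflexive structure theory rather than an explicit small example. If that rigidity is not available off the shelf in the needed generality, the fallback is to reduce to the Artinian Golod case and invoke \cite[6.1]{AvBu}-type arguments replaced by the appropriate Golod-ring analogue.
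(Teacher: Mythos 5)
The first two claims of your proposal are essentially the paper's argument. Your Matlis-duality computation $\Hom_R(\Tor_i^R(E_R(k),M),E_R(k)) \cong \Ext_R^i(M,R) = 0$ for $i>0$ is the same as the paper's derived-category identification $M\otimes_R^{\mathbf L}E_R(k)\simeq M^{*\vee}$, and since both $M$ and $E_R(k)$ have infinite projective dimension, the single vanishing $\Tor_{>0}^R(M,E_R(k))=0$ puts \emph{both} modules in $\NT(R)$ (the paper's separate appeal to Corollary \ref{testw} for $E_R(k)$ is a redundancy).

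The third claim is where the proposal breaks down. You reduce $\Tor_{\gg 0}^R(E_R(k),L)=0$ to $\Ext_R^{\gg 0}(L,R)=0$ correctly, but the next assertion --- that over an Artinian ring this forces $L$ to be totally reflexive --- is false. Vanishing of $\Ext_R^{>0}(L,R)$ (let alone only eventual vanishing) does not imply total reflexivity over non-Gorenstein Artinian rings; this is precisely the content of Jorgensen--\c{S}ega's examples showing the total-reflexivity conditions are independent. Even if one could repair that, the final step --- that $\Tor_{\gg 0}^R(M,L)=0$ with both $M$ and $L$ totally reflexive over a non-Gorenstein $\mathfrak m^3=0$ ring forces $L$ free --- is exactly the nontrivial rigidity you flag as the ``main obstacle,'' and the proposal supplies no argument for it, only a survey of places one might look. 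So the proof is incomplete at its crux.

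The paper avoids both obstacles via a complexity count rather than total-reflexivity structure theory. From $\Tor_{\gg 0}^R(M,L)=\Tor_{\gg 0}^R(E_R(k),L)=0$, the result \cite[2.9]{CSV} gives $\cx_R L\le 1$ and $\cx_R M=1$, together with the additivity $\cx_R(M\otimes_R L)=\cx_R M+\cx_R L$. By Lescot's theorem, over a ring with $\mathfrak m^3=0$ every module has complexity $0$, $1$, or $\infty$; thus $\cx_R L=1$ would force $\cx_R(M\otimes_R L)=2$, which is impossible. Hence $\cx_R L=0$, i.e.\ $\pd_R L<\infty$, and $M\oplus E_R(k)$ is a test module. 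If you want to salvage your route, you would need an actual proof of the rigidity statement (or to replace it by the Lescot/\cite{CSV} complexity machinery, at which point you have reproduced the paper's argument).
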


\begin{proof}
Note that $M$ and $E:=E_R(k)$ have infinite projective dimension.
Corollary \ref{testw} implies $E\in \NT(R)$.
Setting $(-)^{\ast}=\Hom(-,R)$ and $(-)^{\vee}=\Hom(-,E)$, we deduce:
$$
M\otimes_R^{\bf L}E
\cong M\otimes_R^{\bf L}\RHom_R(R,E)
\cong\RHom_R(\RHom_R(M,R),E)
\cong M^{\ast\vee}.
$$
Here the second isomorphism follows from \cite[A.4.24]{Larsbook}, and the total reflexivity of $M$ yields the third isomorphism.
Hence we have $\Tor_{>0}^{R}(M,E)=0$.
In particular, $M,E\in \NT(R)$.

Let $L\in \md R$ and assume $\Tor_{\gg 0}^{R}(M\oplus E, L)=0$.
Then $\Tor_{\gg0}^R(M,L)=\Tor_{\gg0}^R(E,L)=0$.
It follows from \cite[2.9]{CSV} that $\cx_RL\leq 1$ and $\cx_RM=1$.
One also sees that $\cx_{R}(M\otimes_{R}L)=\cx_{R}M+\cx_{R}L$.
The complexity of an $R$-module can only be $0$, $1$ or $\infty$ by \cite{Lescot} (see also \cite[1.1]{ChVe}).
Hence $\cx_RL=0$, i.e., $\pd_RL<\infty$.
Thus $M\oplus E \in \ttp(R)$. 
\end{proof}

In fact, closure under direct sums is equivalent to closure under extensions:

\begin{prop}\label{propconj}
Let $R$ be a local ring.
The following are equivalent:
\begin{enumerate}[\rm(i)]
\item
$\NT(R)$ is closed under extensions.
\item
$\NT(R)$ is closed under direct sums.
\end{enumerate}
\end{prop}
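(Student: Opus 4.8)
The plan is to prove both implications directly; the only tools needed are the long exact sequence of $\Tor$ and the observation that a split short exact sequence is a particular instance of an extension.

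For the implication (i)\,$\Rightarrow$\,(ii): given $A,C\in\NT(R)$, I would consider the split exact sequence $0\to A\to A\oplus C\to C\to 0$. This exhibits $A\oplus C$ as an extension of $C$ by $A$ with both ends lying in $\NT(R)$, so closure under extensions forces $A\oplus C\in\NT(R)$. An immediate induction then yields closure under all finite direct sums.

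For the implication (ii)\,$\Rightarrow$\,(i): start with a short exact sequence $0\to A\to B\to C\to 0$ with $A,C\in\NT(R)$. The place where closure under direct sums is actually used is in producing a \emph{single} module that witnesses non-testness for $A$ and $C$ at once: by (ii) we have $A\oplus C\in\NT(R)$, so there is an $R$-module $N$ with $\pd_RN=\infty$ and $\Tor^R_{\gg 0}(A\oplus C,N)=0$, whence $\Tor^R_{\gg 0}(A,N)=\Tor^R_{\gg 0}(C,N)=0$. Tensoring the short exact sequence with $N$ and reading off the long exact sequence $\cdots\to\Tor^R_i(A,N)\to\Tor^R_i(B,N)\to\Tor^R_i(C,N)\to\cdots$ shows that $\Tor^R_i(B,N)=0$ for $i\gg 0$. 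Since $N\notin\fpd(R)$, this gives $B\in\NT(R)$.

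I do not expect a genuine obstacle here. The one point requiring care is exactly the step highlighted above: without closure under direct sums one only obtains possibly distinct witness modules $N_A$ and $N_C$ for $A$ and $C$, and these cannot be fed simultaneously into a single long exact sequence, so the hypothesis of (ii) is used in an essential—if mild—way. Everything else is a formal manipulation of the defining property of $\NT(R)$.
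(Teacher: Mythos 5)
Your proof is correct and follows essentially the same route as the paper: the (i)$\Rightarrow$(ii) direction uses the split extension $0\to A\to A\oplus C\to C\to 0$, and the (ii)$\Rightarrow$(i) direction extracts a single witness module $N$ from $A\oplus C\in\NT(R)$ and feeds it into the long exact sequence of $\Tor$. The paper simply declares (i)$\Rightarrow$(ii) ``clear,'' but the argument it has in mind is the one you spelled out.
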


\begin{proof}
Clearly, (i) implies (ii).
Assume (ii) holds.
Let $0 \to M \to U \to N \to 0$ be an exact sequence with $M, N\in \NT(R)$.
Then there is an $R$-module $X$ with $\pd_RX=\infty$ such that $\Tor^{R}_{\gg 0}(M\oplus N,X)=0$.
Hence $\Tor^{R}_{\gg 0}(U,X)=0$, and thus $U\in \NT(R)$.
\end{proof}

If $R$ is a nonregular complete intersection, then $\NT(R)$ is closed under direct sums by Proposition \ref{propCI}.
Hence we deduce the following result from Proposition \ref{propconj}.

\begin{cor}\label{12241}
If $\NT(R)$ is closed under direct sums, then $\NT(R)$ is a resolving subcategory of $\md R$.
Thus, $\NT(R)$ is resolving when $R$ is a nonregular complete intersection.
\end{cor}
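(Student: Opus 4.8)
The plan is to verify, one at a time, that $\NT(R)$ satisfies the four conditions defining a resolving subcategory of $\md R$ --- it contains the free modules and is closed under direct summands, extensions, and syzygies --- using the hypothesis only to settle the extension axiom, and then to read off the complete intersection case from Proposition \ref{propCI}. First I would dispose of a degenerate case: if $R$ is regular then $\fpd(R)=\md R$ and $\NT(R)=\emptyset$, which is not resolving, so the statement is to be understood with $R$ singular; in that case $\fpd(R)\subseteq\NT(R)$ by the observation preceding the corollary, and in particular $R\in\NT(R)$, so $\NT(R)$ contains the free modules.

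\emph{Closure under direct summands} is immediate from additivity of $\Tor$: if $M\oplus M'\in\NT(R)$, a module $N\notin\fpd(R)$ with $\Tor_{\gg0}^R(M\oplus M',N)=0$ also has $\Tor_{\gg0}^R(M,N)=0$, whence $M\in\NT(R)$. \emph{Closure under syzygies} is equally quick: given $M\in\NT(R)$ and a short exact sequence $0\to\Omega M\to F\to M\to0$ with $F$ free, choose $N\notin\fpd(R)$ with $\Tor_{\gg0}^R(M,N)=0$; then $\Tor_i^R(\Omega M,N)\cong\Tor_{i+1}^R(M,N)=0$ for $i\gg0$, so $\Omega M\in\NT(R)$. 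The degenerate subcases ($\pd_RM<\infty$, or $\Omega M$ free or zero) are harmless because $\fpd(R)\subseteq\NT(R)$. Finally, \emph{closure under extensions} is precisely Proposition \ref{propconj}, which asserts that over any local ring $\NT(R)$ is closed under extensions if and only if it is closed under direct sums; the latter is our standing hypothesis. Together with $R\in\NT(R)$ these four properties say exactly that $\NT(R)$ is resolving.

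For the last sentence, let $R$ be a nonregular complete intersection and set $c=\cod R\geq1$. Proposition \ref{propCI} gives $\ttp(R)=\{M\in\md R\mid\cx_RM=c\}$, hence $\NT(R)=\{M\in\md R\mid\cx_RM<c\}$. Since $\beta_i^R(M\oplus M')=\beta_i^R(M)+\beta_i^R(M')$ forces $\cx_R(M\oplus M')=\max\{\cx_RM,\cx_RM'\}$, this subcategory is closed under direct sums, and the first part of the corollary applies.

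As for where the difficulty lies: essentially none remains. The one step that is genuinely delicate --- stability under extensions --- has already been absorbed into Proposition \ref{propconj}, and Proposition \ref{propCI} hands us the explicit description of $\NT(R)$ needed over complete intersections. The only points requiring any attention are the bookkeeping with the vanishing range in the syzygy step and the (necessary) restriction to singular $R$, without which the free modules fail to lie in $\NT(R)$.
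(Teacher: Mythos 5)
Your proof is correct and follows the same approach the paper intends, simply making the implicit steps explicit: the paper derives the corollary directly from Proposition \ref{propconj} (extension closure from direct-sum closure) and Proposition \ref{propCI} (the complexity description over complete intersections), leaving the routine verifications of closure under free modules, summands, and syzygies unstated. Your remark that the statement must be read with $R$ nonregular (since otherwise $\NT(R)=\emptyset$ is not resolving) is a worthwhile clarification, consistent with the paper's preceding observation that $\NT(R)\supseteq\fpd(R)$ only when $R$ is not regular.
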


We state here a conjecture of David A. Jorgensen (cf. Remark \ref{2232} below).

\begin{conj}\label{conjtest}
Let $R$ be a local ring.
Assume $\NT(R)\neq \fpd(R)$.
If $\NT(R)$ is closed under direct sums, then $R$ is a complete intersection of codimension at least two.
\end{conj}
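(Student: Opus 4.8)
The statement splits into two parts: that $R$ is a complete intersection, and that its codimension is at least two. The second is the easy one, and follows once the first is known. A complete intersection of codimension $\le 1$ is either regular (a degenerate case which one should exclude, since there $\NT(R)=\emptyset$) or a hypersurface, and in the hypersurface case Corollary \ref{cor2propCI}(i) gives $\ttp(R)=\{M\in\md R\mid\pd_RM=\infty\}$, so that $\NT(R)=\fpd(R)$, contradicting the hypothesis. Hence $\cod R\ge 2$, and the whole content is to prove that $R$ is a complete intersection.

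For that, I would start from Corollary \ref{12241}: under the hypothesis, $\NT(R)$ is a resolving subcategory of $\md R$ strictly containing $\fpd(R)$, and I fix a nontest module $M_0$ with $\pd_RM_0=\infty$. The guiding idea is to read closure of $\NT(R)$ under direct sums as a strong ``finite intersection property'' for Tor-vanishing: writing, for a nontest $M$, $\mathcal W(M)=\{N\in\md R\mid\pd_RN=\infty,\ \Tor^R_{\gg0}(M,N)=0\}$ (nonempty precisely because $M\in\NT(R)$), closure under finite direct sums says exactly that $\mathcal W(M_1)\cap\cdots\cap\mathcal W(M_r)\ne\emptyset$ for all nontest $M_1,\dots,M_r$. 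Over a complete intersection of codimension $c$ this is transparent: by \cite[Theorem IV]{AvBu} one has $\Tor^R_{\gg0}(M,N)=0\iff V(M)\cap V(N)=\{0\}$, by Proposition \ref{propCI} the test modules are those with $V(M)=\overline k^{\,c}$, and a finite union of proper closed cones in the irreducible space $\overline k^{\,c}$ is again proper. The heart of the conjecture is to run this backwards: to show that ``enough compatible Tor-vanishing among modules of infinite projective dimension'' can only be accounted for by such an irreducible support space, hence forces $R$ to be a complete intersection. A natural vehicle is the homotopy Lie algebra $\pi^{\ge2}(R)$: one would try to prove that if $R$ is not a complete intersection, i.e.\ $\pi^{\ge3}(R)\ne0$, then $\Ext^*_R(k,k)$ is too large (its relevant support locus reducible, or not a variety at all) for this intersection property to survive, and to manufacture explicitly two nontest modules whose direct sum is a test module, contradicting the hypothesis.

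The main obstacle is the absence of any theory of support varieties over an arbitrary local ring: there is nothing concrete to intersect, and even producing modules realizing a prescribed ``small support'' is unclear without first knowing that complexities are finite, which is essentially the complete intersection property itself. One would like to first pass to the completion $\widehat R$ and then to the graded or Artinian case, where rigid tools (Koszul homology, the Eisenbud operators attached to a chosen deformation, cone lengths) are available; but the discussion following Question \ref{QCI} records precisely that test-modulehood is not known to be preserved along flat local homomorphisms, in particular under completion, so this reduction is itself a missing ingredient. A reasonable two-step program is therefore: (i) show that $\NT(R)$ being closed under direct sums and distinct from $\fpd(R)$ forces the same for $\widehat R$, and conversely, reducing to complete $R$; and (ii) over complete $R$, show that this hypothesis already forces $\cx_RM<\infty$ for every $M$, say by detecting the failure of $\cx_RM$ to be maximal by a single module of infinite projective dimension, after which finiteness of the homotopy Lie algebra in high degrees (equivalently, finite generation of $\Ext^*_R(k,k)$ over a polynomial central subalgebra) identifies $R$ as a complete intersection.
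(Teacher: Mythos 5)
This statement is stated in the paper as a \emph{conjecture} (attributed to David A.\ Jorgensen), and the paper offers no proof of it; so there is no internal argument for you to match. Your writeup is appropriately honest that what you have is a research program, not a proof, and your discussion of the program's shape and its obstructions is sound. In particular, you correctly isolate the part that \emph{is} provable with the paper's tools: once one knows $R$ is a complete intersection, Corollary \ref{cor2propCI}(i) rules out the hypersurface case because there $\NT(R)=\fpd(R)$, and so $\cod R\ge2$. You also correctly identify the two genuine missing ingredients for the hard direction: (i) the paper itself records, after Question \ref{QCI}, that preservation of test-modulehood under completion (or any flat local base change) is open, so the reduction to the complete case is not available; and (ii) absent the complete intersection hypothesis there is no support-variety theory to ``run backwards,'' and no mechanism to convert finiteness of Tor-intersections into finiteness of complexities.

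One small point worth flagging in your opening paragraph: in the regular case one has $\NT(R)=\emptyset$ while $\fpd(R)=\md R$, so the hypothesis $\NT(R)\ne\fpd(R)$ is actually \emph{satisfied} vacuously, and $\NT(R)$ is (vacuously) closed under direct sums, yet $R$ has codimension $0$. So the conjecture as literally stated needs the implicit hypothesis that $R$ is nonregular (equivalently, that $\fpd(R)\subsetneq\NT(R)$, which is how the displayed inclusion just before the conjecture should be read). Your parenthetical ``a degenerate case which one should exclude'' gets at this, but it is a hypothesis the conjecture needs, not something you can derive. With that caveat your proposal is a fair and accurately self-critical assessment of where the problem stands.
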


Next we investigate nontest modules in resolving subcategories.
For $M\in\md R$ we denote by $\res M$ the resolving closure of $M$.
The full subcategory of $\md R$ consisting of maximal Cohen-Macaulay modules is denote by $\CM(R)$.

\begin{prop}\label{mainprop1}
Let $R$ be a Henselian local ring, and $\mathcal{X}$ a resolving subcategory of $\md R$.
Suppose there are only finitely many nonisomorphic indecomposable modules in $\mathcal{X} \cap \NT(R)$.
Then $\Ext^{\gg 0}_{R}(M,R)=0$ if and only if $\pd_RM<\infty$ for all $M\in \mathcal{X} \cap \NT(R)$.
In particular, $\G-dim_RM=\pd_RM$.
\end{prop}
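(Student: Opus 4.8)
The plan is to reduce to the forward implication, pass to a high syzygy so that the module and all of its syzygies lie in a fixed finite list and are right‑orthogonal to $R$, and then force such a module to be free.

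\textbf{Step 1: formal reductions.} The implication $\pd_RM<\infty\Rightarrow\Ext^{\gg0}_R(M,R)=0$ is immediate, and the last clause $\G-dim_RM=\pd_RM$ follows formally from the equivalence: if $\G-dim_RM<\infty$ then $\Ext^{>0}_R(M,R)=0$, hence $\pd_RM<\infty$, and then $\G-dim_RM=\pd_RM$; if $\G-dim_RM=\infty$ then $\pd_RM=\infty$ since Gorenstein dimension never exceeds projective dimension. So it remains to show that $M\in\mathcal X\cap\NT(R)$ with $\Ext^{\gg0}_R(M,R)=0$ has finite projective dimension. I first note that $\mathcal Y:=\mathcal X\cap\NT(R)$ is closed under syzygies (as $\mathcal X$ is resolving and $\NT(R)$ is closed under syzygies, using $\Tor^R_{i+1}(N,X)\cong\Tor^R_i(\Omega_RN,X)$) and under direct summands, and contains the free modules; hence its indecomposable objects form a finite set $\{C_1,\dots,C_s\}\ni R$, and every $\Omega^i_RM$ is a sum of copies of the $C_j$. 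By dimension shifting I replace $M$ by a high syzygy and assume $\Ext^{>0}_R(M,R)=0$; then $\Ext^{>0}_R(\Omega^i_RM,R)=0$ for all $i\ge0$, i.e. the $R$-dual of a minimal free resolution of $M$ is acyclic.

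\textbf{Step 2: a linear recursion and a dichotomy.} Minimal syzygies commute with finite direct sums, so writing $\Omega_RC_j\cong\bigoplus_lC_l^{\,b_{lj}}$ and $\Omega^i_RM\cong\bigoplus_jC_j^{\,v_i(j)}$ yields $v_{i+1}=Bv_i$ for the nonnegative integer matrix $B=(b_{lj})$, whence $v_i=B^iv_0$ and $\beta^R_i(M)=\sum_j\mu_R(C_j)\,v_i(j)$. If the orbit $(v_i)$ dies, then $\pd_RM<\infty$ and we are done. Otherwise Perron--Frobenius, applied to $B$ on the eventual support of the orbit, leaves exactly two possibilities: the Betti numbers of $M$ grow exponentially, so $\cx_RM=\infty$; or $(v_i)$ is eventually periodic, so some high syzygy $P$ of $M$ is nonfree with $\Omega^p_RP\cong P$ for some $p>0$.

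\textbf{Step 3: killing both cases.} In the periodic case, assemble a complete resolution of $P$ from its periodic minimal resolution; since $\Ext^{>0}_R(P,R)=0$ (automatic from periodicity and $\Ext^{\gg0}_R(M,R)=0$), the dual complex is again acyclic, so $P$ is totally reflexive, $\G-dim_RP=0$. But a nonfree totally reflexive $P$ cannot live in $\mathcal X\cap\NT(R)$ once that category has only finitely many indecomposables: either one argues directly that such a $P$ is a test module (contradicting $P\in\NT(R)$), or one invokes that a single nonfree totally reflexive module forces infinitely many indecomposable ones (Christensen--Piepmeyer--Striuli--Takahashi), which together with the resolving structure overflows $\{C_1,\dots,C_s\}$. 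Hence $P$, and so $M$, has finite projective dimension. The exponential case is excluded in the same spirit: $\cx_RM=\infty$ is incompatible with $M\in\mathcal X\cap\NT(R)$ of finite type---over a complete intersection this is automatic from the description $\NT(R)=\{\,\cx_R(-)<\cod R\,\}$ obtained in Section~2, and in general it is the point to be argued.

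\textbf{Main obstacle.} The crux is Step~3: converting the three hypotheses ($M\in\NT(R)$, $\Ext^{\gg0}_R(M,R)=0$, finitely many indecomposables in $\mathcal X\cap\NT(R)$) into an actual contradiction. None suffices alone---over a hypersurface every maximal Cohen--Macaulay module is right‑orthogonal to $R$, and over rings with $\mathfrak m^3=0$ there are nonfree totally reflexive nontest modules---so the finiteness of $\mathcal X\cap\NT(R)$ must be exploited, presumably through rigidity of resolving subcategories of finite representation type together with the structure of $\NT(R)$ over complete intersections from Section~2. I would spend most of the effort making the periodic case and the exclusion of exponential growth airtight and pinning down the exact finite‑type input; the remaining bookkeeping (dimension shifting, Krull--Schmidt cancellation, whence the Henselian hypothesis) is routine.
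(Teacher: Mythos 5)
Your proposal is a genuinely different approach from the paper's, and you are right to flag Step~3 as the crux — it is a real gap, and Step~2 has an additional unstated one.

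The paper does not do any Betti-number bookkeeping at all. Its proof goes through the classification of \emph{contravariantly finite resolving subcategories}. Concretely: assume $M\in\mathcal X\cap\NT(R)$ has $\Ext^{\gg0}_R(M,R)=0$ and $\pd_RM=\infty$. For a fixed $X$ with $\pd_RX=\infty$ the subcategory $\NT_X(R)=\{N\mid\Tor^R_{\gg0}(N,X)=0\}$ is resolving, and $\NT(R)=\bigcup_{\pd X=\infty}\NT_X(R)$; picking $X$ with $M\in\NT_X(R)$ gives $\res M\subseteq\NT_X(R)\cap\mathcal X\subseteq\mathcal X\cap\NT(R)$. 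The finiteness hypothesis then says $\res M$ has only finitely many indecomposables, hence is contravariantly finite. Since $\Omega^dk$ is a test module it does not lie in $\res M$, yet Takahashi's classification of contravariantly finite resolving subcategories over Henselian local rings \cite[1.4]{Ryo1} (this is where the hypothesis $\Ext^{\gg0}_R(M,R)=0$ enters) forces $R$ Cohen--Macaulay with $\res M=\CM(R)$ — which does contain $\Omega^dk$, a contradiction. So the heavy lifting is outsourced to a structural theorem about resolving subcategories; nothing about growth rates of Betti numbers is used.

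On your proposal: the dichotomy in Step~2 is not correct as stated. A nonnegative integer matrix restricted to the eventual support of the orbit can have spectral radius~$1$ with nontrivial Jordan structure, e.g. $B=\begin{pmatrix}1&1\\0&1\end{pmatrix}$, $v_0=(0,1)^T$, giving $v_i=(i,1)^T$ — polynomial, unbounded, and not eventually periodic. So there is a third regime (finite complexity $\ge2$) that your two cases do not cover, and it is not vacuous: over complete intersections of codimension $\ge2$ such syzygy behavior occurs for nontest modules. Your Step~3 then has the gap you already identify: a nonfree totally reflexive nontest module is \emph{not} automatically a contradiction — the paper's own Proposition~\ref{propdirectsum} exhibits one over rings with $\mathfrak m^3=0$ — and the Christensen--Piepmeyer--Striuli--Takahashi infinitude of totally reflexives lives inside $\CM(R)$, not inside $\mathcal X\cap\NT(R)$, so it does not directly overflow your finite list without further argument. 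In short, the paper's route via \cite[1.4]{Ryo1} is exactly the input that converts ``finitely many indecomposables in a resolving subcategory with a module right-orthogonal to $R$'' into ``$R$ CM and the subcategory is $\CM(R)$,'' which is what your Step~3 is groping for and what a pure Betti-growth analysis cannot supply.
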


\begin{proof}
Take a module $M\in\mathcal{X} \cap \NT(R)$ with $\Ext^{\gg 0}_{R}(M,R)=0$.
Assume $\pd_RM=\infty$.
Given $X\in \md R$, we set $\NT_{X}(R)=\{M\in \md R\mid\Tor_{\gg 0}^{R}(M,X)=0\}$. It is easy to see that $\NT_{X}(R)$ is a resolving subcategory of $\md R$, and
$$
\NT(R)=\textstyle\bigcup_{\pd X=\infty} \NT_{X}(R).
$$
There is an $R$-module $X$ with $\pd_R X=\infty$ and $M\in \NT_{X}(R)$.
Hence $\res M\subseteq \NT_{X}(R) \subseteq \NT(R)$, and we have $\res M\subseteq \mathcal{X} \cap \NT(R)$.
By assumption, there are only finitely many nonisomorphic indecomposable modules in $\res M$ (so $\res M$ is contravariantly finite).
As $\Omega^{d}k\in \ttp(R)$, we have $\Omega^{d}k \notin \res M$.
Hence by \cite[1.4]{Ryo1}, $R$ is Cohen-Macaulay and $\res M=\CM(R)$.
This is a contradiction since $\Omega^{d}k\in \CM(R)$.
Consequently $\pd_RM<\infty$.
\end{proof}

\begin{cor} \label{nothypersurface}
Let $R$ be a Henselian local ring.
Assume that there are only finitely many indecomposable $R$-modules (up to isomorphism) in $\CM(R) \cap \NT(R)$.
\begin{enumerate}[\rm(i)]
\item
If $M\in \CM(R) \cap \NT(R)$, then either $M$ is free or $\G-dim_{R}M=\infty$. 
\item
If $R$ is Gorenstein and nonregular, then $\NT(R)=\fpd(R)$.
\end{enumerate}
\end{cor}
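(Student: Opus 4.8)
The plan is to deduce both assertions from Proposition \ref{mainprop1} by taking the resolving subcategory $\mathcal{X}$ to be $\CM(R)$ itself. First I would check that $\CM(R)$ is indeed a resolving subcategory of $\md R$: it contains the free modules, and it is closed under direct summands, extensions, and syzygies (the last because a syzygy of a maximal Cohen--Macaulay module over a local ring is again maximal Cohen--Macaulay, using $\depth R$ as the uniform depth). With the standing hypothesis that $\CM(R)\cap\NT(R)$ has only finitely many indecomposable objects up to isomorphism, Proposition \ref{mainprop1} applies and gives: for every $M\in\CM(R)\cap\NT(R)$, one has $\Ext^{\gg0}_R(M,R)=0$ if and only if $\pd_RM<\infty$, and moreover $\G-dim_RM=\pd_RM$ for such $M$.

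For part (i), take $M\in\CM(R)\cap\NT(R)$ and suppose $\G-dim_RM<\infty$. Since $M$ is maximal Cohen--Macaulay, the Auslander--Bridger equality forces $\G-dim_RM=0$, so $M$ is totally reflexive; in particular $\Ext^{>0}_R(M,R)=0$, hence certainly $\Ext^{\gg0}_R(M,R)=0$. By Proposition \ref{mainprop1}, $\pd_RM<\infty$, and then the Auslander--Buchsbaum formula (again using $\depth M=\depth R$) yields $\pd_RM=0$, i.e.\ $M$ is free. This gives the dichotomy: either $M$ is free or $\G-dim_RM=\infty$.

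For part (ii), assume $R$ is Gorenstein and nonregular. Over a Gorenstein local ring every module has finite G-dimension, and the maximal Cohen--Macaulay modules are exactly the totally reflexive ones, so $\Ext^{\gg0}_R(M,R)=0$ holds for every $M\in\CM(R)$. Applying Proposition \ref{mainprop1} to $\mathcal{X}=\CM(R)$, every $M\in\CM(R)\cap\NT(R)$ satisfies $\pd_RM<\infty$, hence lies in $\fpd(R)$; combined with the trivial inclusion $\fpd(R)\subseteq\NT(R)$ (valid since $R$ is not regular) this shows $\CM(R)\cap\NT(R)=\fpd(R)\cap\CM(R)$, which over a Gorenstein ring is just $\{\text{free modules}\}$. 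To upgrade this to $\NT(R)=\fpd(R)$ for all of $\md R$, I would pass to syzygies: for arbitrary $M\in\NT(R)$, a high enough syzygy $\Omega^d M$ (with $d=\depth R$) is maximal Cohen--Macaulay and still lies in $\NT(R)$ because $\NT(R)$ is resolving (here one needs $\NT(R)$ closed under syzygies — which holds since each $\NT_X(R)$ is resolving, as recorded in the proof of Proposition \ref{mainprop1}; note we are in the Gorenstein case so we may invoke that $\NT(R)$ is a union of the resolving subcategories $\NT_X(R)$ directly rather than through Corollary \ref{12241}). Thus $\Omega^dM$ is free, so $\pd_RM\le d<\infty$, giving $M\in\fpd(R)$; the reverse inclusion is automatic.

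The main obstacle I anticipate is the last reduction step in (ii): one must be careful that taking a syzygy keeps a module inside $\NT(R)$, i.e.\ that $\NT(R)$ is closed under syzygies without first invoking closure under direct sums. This is fine because $M\in\NT(R)$ means $M\in\NT_X(R)$ for some $X$ with $\pd_RX=\infty$, and $\NT_X(R)$ is resolving, so $\Omega^dM\in\NT_X(R)\subseteq\NT(R)$; the potential pitfall is conflating this per-$X$ resolvingness with the (generally false) resolvingness of the whole union, but for the syzygy operation alone the per-$X$ statement suffices. A secondary point to handle cleanly is that over a nonregular Gorenstein ring the nonfree maximal Cohen--Macaulay modules genuinely have infinite projective dimension, so the finiteness hypothesis on $\CM(R)\cap\NT(R)$ is exactly the hypothesis that $R$ has finite CM-representation type restricted to the nontest part — but no extra argument is needed there beyond what Proposition \ref{mainprop1} already supplies.
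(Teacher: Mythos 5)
Your proposal is correct and follows essentially the same route as the paper: apply Proposition \ref{mainprop1} with $\mathcal{X}=\CM(R)$, use the Auslander--Bridger and Auslander--Buchsbaum formulas to get the dichotomy in (i), and reduce (ii) to (i) by replacing $M$ with a maximal Cohen--Macaulay syzygy. Your explicit justification that $\NT(R)$ is closed under syzygies (via the per-$X$ resolvingness of $\NT_X(R)$) fills in a step the paper leaves tacit, which is a sound precaution.
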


\begin{proof}
The assertion (i) is immediate from Proposition \ref{mainprop1}: take $\mathcal{X}=\CM(R)$.
As to (ii), let $Z\in \NT(R)$.
Then $\Omega^{d}Z \in \CM(R) \cap \NT(R)$, where $d=\dim R$.
By (i), $\Omega^dZ$ is free.
\end{proof}

\begin{rmk}\label{2232}
Recall that $\NT(R)=\fpd(R)$ if $R$ is a hypersurface (Corollary \ref{cor2propCI}(i)).
In view of this fact, it is worth noting that a Henselian Gorenstein ring satisfying the hypotheses of Corollary \ref{nothypersurface} is not necessarily a hypersurface: Let $k$ be a field and let $\displaystyle{R=k[x,y,z]/(x^2-y^2, x^2-z^2, xy, xz, yz)}$.
Then $R$ is an Artinian (hence, Henselian) Gorenstein local ring that is not a hypersurface.
By \cite[3.1(2)]{CSV} we see that $\NT(R) =\fpd(R)$ and that $R$ is the unique indecomposable module in $\CM(R)\cap\NT(R)$.
\end{rmk}

It is not known whether there exist modules $M$ over arbitrary local rings $R$ such that $\pd M=\infty$ and $\Tor^{R}_{\gg 0}(M,M)=0$.
Our next result determines certain conditions, in terms of the category $\NT(R)$, for the existence of such modules.

\begin{prop} \label{mainprop2}
Let $R$ be a local ring.
Assume $\NT(R) \neq \fpd(R)$ and $\NT(R)=\res M$ for some $M\in \md R$.
Then there is $X \in \NT(R)$ with $\pd_RX=\infty$ and $\Tor^{R}_{\gg 0}(X,X)=0$.
\end{prop}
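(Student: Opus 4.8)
The plan is to exploit the decomposition $\NT(R)=\bigcup_{\pd_RX=\infty}\NT_X(R)$ recorded in the proof of Proposition \ref{mainprop1}, where $\NT_X(R)=\{N\in\md R\mid\Tor_{\gg0}^R(N,X)=0\}$ is a resolving subcategory of $\md R$ for each $X\in\md R$. First I would use the hypothesis $\NT(R)=\res M$: since $M\in\res M=\NT(R)$, there is a module $X\in\md R$ with $\pd_RX=\infty$ and $\Tor_{\gg0}^R(M,X)=0$, that is, $M\in\NT_X(R)$. Because $\NT_X(R)$ is resolving and contains $M$, minimality of the resolving closure gives $\res M\subseteq\NT_X(R)$; conversely, every module in $\NT_X(R)$ is nontest (with witness $X$, as $\pd_RX=\infty$), so $\NT_X(R)\subseteq\NT(R)=\res M$. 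Hence $\NT(R)=\res M=\NT_X(R)$, and the whole category of nontest modules collapses onto a single $\NT_X(R)$.

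Next I would bring in the assumption $\NT(R)\neq\fpd(R)$. Since $\NT(R)\supseteq\fpd(R)$, this produces a module $Z\in\NT(R)$ with $\pd_RZ=\infty$. Now $Z\in\NT(R)=\NT_X(R)$ forces $\Tor_{\gg0}^R(Z,X)=0$, and by the symmetry isomorphism $\Tor_i^R(Z,X)\cong\Tor_i^R(X,Z)$ we get $\Tor_{\gg0}^R(X,Z)=0$, i.e. $X\in\NT_Z(R)$. As $\pd_RZ=\infty$, one has $\NT_Z(R)\subseteq\NT(R)=\NT_X(R)$, so in particular $X\in\NT_X(R)$, which means $\Tor_{\gg0}^R(X,X)=0$. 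Together with $\pd_RX=\infty$ and $X\in\NT(R)$, the module $X$ is the one sought.

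I expect no serious technical obstacle here: once the union decomposition and the resolving structure of each $\NT_X(R)$ are in hand, the argument is purely formal. The point that requires care — and is really the crux — is the bootstrapping in the second step: having collapsed $\NT(R)$ to $\NT_X(R)$, one must feed a genuinely infinite-projective-dimension module $Z$ (supplied precisely by $\NT(R)\neq\fpd(R)$) through the symmetry of $\Tor$ to conclude that $X$ itself lies in $\NT_X(R)$, i.e. is $\Tor$-self-orthogonal. If $R$ were regular this step would be vacuous, which is exactly why the hypothesis $\NT(R)\neq\fpd(R)$ cannot be dropped.
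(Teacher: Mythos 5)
Your proof is correct and uses the same core ingredients as the paper: the decomposition $\NT(R)=\bigcup_{\pd X=\infty}\NT_X(R)$, the fact that each $\NT_X(R)$ is resolving, and the collapse $\NT(R)=\res M=\NT_X(R)$. The only difference is organizational: the paper first uses $\NT(R)\neq\fpd(R)$ to deduce $\pd_RM=\infty$ (if $\pd_RM<\infty$ then $\NT(R)=\res M\subseteq\fpd(R)$, a contradiction), and then uses $M$ itself as the witness that $X\in\NT(R)$; you instead establish $\NT(R)=\NT_X(R)$ first, pull a separate module $Z\in\NT(R)\setminus\fpd(R)$, and feed $Z$ through $\Tor$-symmetry to land $X\in\NT(R)=\NT_X(R)$. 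Both routes are valid and equally short; the paper's is marginally more economical since it recycles $M$ rather than introducing $Z$.
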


\begin{proof}
If $\pd_RM<\infty$, then $\res M\subseteq\fpd(R)$, hence $\NT(R)=\fpd(R)$.
This contradiction shows $\pd_RM=\infty$.
Since $M\in\NT(R)$, we have $M\in \NT_{X}(R)$ for some module $X$ with $\pd_RX=\infty$.
As $\pd_RM=\infty$, we see that $X\in \NT(R)$.
It holds that $\NT(R)=\res M\subseteq \NT_{X}(R)$, whence $\NT(R)=\NT_{X}(R)$.
Thus $X\in \NT_{X}(R)$, so that $\Tor^{R}_{\gg 0}(X,X)=0$.
\end{proof}

Using the above proposition, we obtain an interesting property of $\NT(R)$:

\begin{cor} \label{cicor}
Let $R$ be a local complete intersection of codimension at least two.
Then $\NT(R)\neq \res M$ for all $R$-modules $M$.
\end{cor}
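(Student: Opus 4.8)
The plan is to argue by contradiction, with Proposition \ref{mainprop2} doing the heavy lifting. Suppose that $\NT(R)=\res M$ for some $R$-module $M$, and write $c=\cod R$, so $c\geq 2$ by hypothesis. The idea is that Proposition \ref{mainprop2} will hand us a module with infinite projective dimension whose self-$\Tor$ eventually vanishes, and that this cannot happen over a complete intersection.

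First I would check the remaining hypothesis needed to invoke Proposition \ref{mainprop2}, namely that $\NT(R)\neq\fpd(R)$. Since $c\geq 2$, there is an $R$-module $N$ with $\cx_RN=1$ (see \cite[5.7]{AGP}). Then $\pd_RN=\infty$ because $\cx_RN\neq 0$, while $\cx_RN=1<c$ gives $N\notin\ttp(R)$ by Proposition \ref{propCI}; hence $N\in\NT(R)\setminus\fpd(R)$, so that $\NT(R)\neq\fpd(R)$. Proposition \ref{mainprop2} then produces an $R$-module $X\in\NT(R)$ with $\pd_RX=\infty$ and $\Tor^R_{\gg 0}(X,X)=0$.

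It remains to see that over the complete intersection $R$ such a self-vanishing of $\Tor$ is impossible once $\pd_RX=\infty$, and this is the same support-variety computation already used in the proof of Proposition \ref{propCI}: by \cite[Theorem IV]{AvBu} the vanishing $\Tor^R_{\gg 0}(X,X)=0$ forces $V(X)=V(X)\cap V(X)=\{0\}$, so $\cx_RX=\dim V(X)=0$ and therefore $\pd_RX<\infty$, a contradiction. I do not expect a serious obstacle here; the argument is essentially the composition of Proposition \ref{mainprop2} with the diagonal support-variety criterion for complete intersections. The one point deserving care is the verification that $\NT(R)\neq\fpd(R)$, which is exactly where the assumption $\cod R\geq 2$ enters: over a hypersurface one has $\NT(R)=\fpd(R)$ by Corollary \ref{cor2propCI}(i), and the conclusion fails already for $R=k[[x]]/(x^2)$, where $\NT(R)=\fpd(R)=\res R$.
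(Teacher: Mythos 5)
Your proof is correct and follows the same route as the paper: use \cite[5.7]{AGP} plus Proposition~\ref{propCI} to produce a nontest module of infinite projective dimension (so $\NT(R)\neq\fpd(R)$), invoke Proposition~\ref{mainprop2} to obtain $X$ with $\pd_RX=\infty$ and $\Tor^R_{\gg0}(X,X)=0$, and rule this out over a complete intersection. The only (harmless) difference is cosmetic: for the final step you appeal to the support-variety criterion \cite[Theorem IV]{AvBu}, whereas the paper cites \cite[1.2]{Jo2}; both encode the same fact that self-vanishing of $\Tor$ over a complete intersection forces finite projective dimension.
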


\begin{proof}
It follows from \cite[5.7]{AGP} that there is an $R$-module $N$ of complexity $1$.
By Proposition \ref{propCI}, $N$ is a nontest module.
Therefore, if $\NT(R)=\res M$ for some $M\in \md R$, then Proposition \ref{mainprop2} implies that there exists $X \in \NT(R)$ with $\pd_RX=\infty$ and $\Tor^{R}_{\gg 0}(X,X)=0$.
Since $R$ is a complete intersection, such an $R$-module $X$ cannot exist by \cite[1.2]{Jo2}.
\end{proof}

\begin{rmk}
The assumption of Corollary \ref{cicor} on the codimension cannot be weakened.
Indeed, let $R$ be a hypersurface (i.e. $\cod R\le1$).
Then $\NT(R)=\fpd(R)$ by Corollary \ref{cor2propCI}(i).
So, if $R$ is reduced of dimension one, then $\NT(R)$ coincides with the resolving closure of the Auslander transpose of $k$ by \cite[2.1]{crspd}.
\end{rmk}

\begin{ac}
This work started around ``Homological Days'' held at the University of Kansas in May, 2011.
We would like to thank the Department of Mathematics of the University of Kansas for hosting the conference.
Our special thanks are due to Petter Andreas Bergh and David A. Jorgensen for valuable comments and suggestions on earlier versions of this paper.
Part of this work was done during the visit of the third author to the University of Missouri in November, 2011, and the visit of the first author to University of Kansas in April, 2012.
We are grateful for kind hospitality of both institutions.
Finally, we thank the referee for valuable comments.
\end{ac}

\end{document}